\numberwithin{equation}{section}
\newtheorem{theorem}{Theorem}
\newtheorem{lemma}{Lemma}
\newtheorem{proposition}{Proposition}
\newtheorem{remark}{Remark}
\newtheorem{example}{Example}
\numberwithin{theorem}{section}
\numberwithin{corollary}{section}
\numberwithin{lemma}{section}
\numberwithin{definition}{section}
\numberwithin{proposition}{section}
\numberwithin{remark}{section}
\newcommand{\R}{\mathbb R}
\newcommand{\N}{\mathbb N}
\newcommand{\medint}{-\kern  -,375cm\int}
\newcommand{\dint}{\displaystyle\int}
\newcommand{\dist}{\mathop{\mathrm{dist}}\nolimits}
\newcommand{\arcsinh}{\mathop{\mathrm{arcsinh}}\nolimits}
\newcommand{\J}{\mathcal{J}}
\begin{document}
\title{Sharp Poincar\'e inequalities in a class of non-convex sets}
\author{B. Brandolini$^\dagger$- F. Chiacchio$^\dagger$ - E. B. Dryden$^\ddagger$ - J. J. Langford$^\ddagger$}

\keywords{Neumann eigenvalues; lower bounds; non-convex domains}
\subjclass[2010]{35J25,35P15}
\date{}

\maketitle

\footnotesize{
\centerline{$^\dagger$ Dipartimento di Matematica e Applicazioni ``R. Caccioppoli''}
\centerline{Universit\`a degli Studi di Napoli Federico II}
\centerline{Monte S. Angelo, via Cintia, I-80126 Napoli, Italy}
\centerline{\texttt{ brandolini@unina.it}}
\centerline{\texttt{francesco.chiacchio@unina.it}}

\bigskip
\centerline{$^\ddagger$ Department of Mathematics, Bucknell University}
\centerline{One Dent Drive, Lewisburg, PA 17837, USA}
\centerline{\texttt{emily.dryden@bucknell.edu}}
\centerline{\texttt{jeffrey.langford@bucknell.edu}}
}

\begin{abstract}
Let $\gamma$ be a smooth, non-closed, simple curve whose image is symmetric with respect to the $y$-axis, and let $D$ be a planar domain consisting of the points on one side of $\gamma$, within a suitable distance $\delta$ of $\gamma$. Denote by $\mu_1^{odd}(D)$ the smallest nontrivial Neumann eigenvalue having a corresponding eigenfunction that is odd with respect to the $y$-axis.  If $\gamma$ satisfies some simple geometric conditions,  then $\mu_1^{odd}(D)$ can be sharply estimated from below in terms of the length of $\gamma$, its curvature, and $\delta$.  Moreover, we give explicit conditions on $\delta$ that ensure $\mu_1^{odd}(D)=\mu_1(D)$.  Finally, we can extend our bound on $\mu_1^{odd}(D)$ to a certain class of three-dimensional domains.  In both the two- and three-dimensional settings, our domains are generically non-convex.
\end{abstract}

\vskip 1cm
\section{Introduction}\label{Sec: Intro}

Let $D\subset \R^n$ be a bounded,  connected, Lipschitz domain. We study the classical free membrane problem in $D$, that is,
\begin{equation}\label{NP}
\left\{
\begin{array}{ll}
-\Delta u=\mu u &\mbox{in}\>D
\\ \\
\frac{\partial u}{\partial \mathbf{n}}=0 &\mbox{on}\> \partial D,
\end{array}
\right.
\end{equation}
where $\mathbf{n}$ denotes the exterior unit normal to $\partial D$. We
arrange the eigenvalues of \eqref{NP} in a non-decreasing sequence $\{\mu_n(D)\}_{n \in \N_0}$, where each eigenvalue is 
repeated according to its multiplicity. The first eigenfunction of \eqref{NP} is clearly a constant with
eigenvalue $\mu_0(D)=0$ for any $D$. We shall be interested in the first non-trivial eigenvalue $\mu_1(D)$, which admits the following variational characterization:
$$
\mu_1(D)=\min\left\{\frac{\dint_D|\nabla \psi|^2 dxdy}{\dint_D \psi^2 dxdy}: \> \psi \in H^1(D)\setminus\{0\},\> \int_D\psi dxdy=0\right\},
$$
where $H^1(D)$ is the usual Sobolev space of square-integrable functions with weak first-order partials that are also square-integrable; all functions considered here and in what follows are real-valued.

As is well known, many difficulties arise in estimating $\mu_1(D)$. One reason for this is the lack of monotonicity of eigenvalues with respect to set inclusion. Another is the fact that eigenfunctions corresponding to $\mu_1(D)$ must change sign, and localizing the nodal line seems to be a hard problem (e.g., \cite{J}). 

Despite these difficulties, there are lower bounds on $\mu_1(D)$ in certain situations.  The celebrated Payne-Weinberger \cite{PW} inequality states that if $D$ is a convex domain with diameter $d(D)$, then
\begin{equation}\label{pworig}
\mu_1(D)\ge \frac{\pi^2}{d(D)^2}.
\end{equation}
The above estimate is asymptotically sharp, since $\mu_1(D)d(D)^2$ tends to $\pi^2$ for a parallelepiped all but one of whose dimensions shrink to 0. Estimate \eqref{pworig} fails for general non-convex sets, as can be seen by considering a domain consisting of two identical squares connected by a thin corridor. Such a counterexample suggests that a lower bound on $\mu_1(D)$ for non-convex domains should involve geometric quantities other than the diameter. In \cite{BCT_CPDE, BCT} such a lower bound involves the isoperimetric constant relative to $D$, and in \cite{GU} a lower bound is given in terms of an $L^\alpha$ norm of the Riemann conformal mapping of the unit disk onto $D$. Thus the problem of finding a lower bound on $\mu_1(D)$ for non-convex domains is often shifted to another geometric problem.  Related and further results may be found, for instance, in \cite{BCT_DIE,BT,EFK,EHL,FNT, V}.

We consider a class of domains that have a line or plane of symmetry, but that are typically non-convex.  Letting $\mu_1^{odd}$ denote the smallest nontrivial Neumann eigenvalue having a corresponding eigenfunction that is odd with respect to this line or plane, we give explicit lower bounds on $\mu_1^{odd}$. In the two-dimensional case, we let $\gamma(s)=(x(s),y(s)),\> s \in [0, L],$ 
be a smooth, non-closed, simple curve, parametrized with respect to its arc length, and whose image is symmetric with respect to the $y$-axis. That is,
$$
x(L-s)=-x(s), \quad y(L-s)=y(s), \quad s \in \left[0,\frac{L}{2}\right].
$$
Consider the domain $D$ consisting of the points on one side of $\gamma$, within a suitable distance $\delta$ of $\gamma$.  Using the normal vector to $\gamma(s)$ obtained by rotating $\gamma'(s)$ clockwise by $\frac{\pi}{2}$, we may describe $D$ as follows (see Figure 1):  
\begin{equation} \label{D}
D=\left\{(x(s)+ry'(s), y(s)-rx'(s)): \> s \in (0,L), \> r \in (0,\delta)\right\}.
\end{equation}

 \begin{figure}[ht]\label{fig}
\centering
 \includegraphics[scale=0.3]{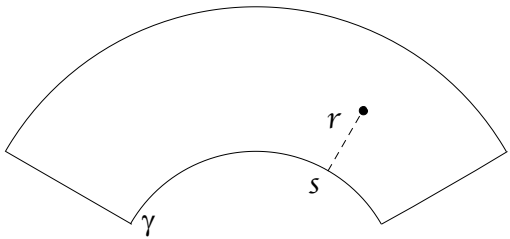}
\caption{A typical domain $D$ described in the Fermi coordinate system.}
\end{figure}

Denote by $\mu_1^{odd}$ the smallest nontrivial Neumann eigenvalue having a corresponding eigenfunction that is odd with respect to the $y$-axis. Our main result is
\begin{theorem}\label{1}
Suppose that the curvature $k(s)$ of $\gamma$ is concave in $[0,L]$ and let $\delta>0$ be such that 
$1+\delta k(s) > 0$ in $[0,L]$.  If $D$ is simply connected, then
\begin{equation*}
\mu_1^{odd}(D) \ge B \frac{\pi^2}{L^2},
\end{equation*}
where $B= \displaystyle{\min_{r\in \left[0,\delta\right], \, s\in \left[0,L \right]}}\frac{1}{(1+rk(s))^2}$; equality holds if $\gamma$ is a line segment.
\end{theorem}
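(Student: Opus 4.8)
The plan is to pass to Fermi (normal) coordinates along $\gamma$, reduce the two–dimensional Rayleigh quotient to a one–parameter family of weighted one–dimensional Poincar\'e inequalities, and settle these by Barta's inequality, with concavity entering only at the very last step.

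First I would parametrize $D$ by the map $F(s,r)=(x(s)+ry'(s),\,y(s)-rx'(s))$. Under the hypotheses $1+rk(s)>0$ and $D$ simply connected, $F$ is a diffeomorphism of $(0,L)\times(0,\delta)$ onto $D$; the standard Fermi computation gives metric $(1+rk)^2\,ds^2+dr^2$ and area element $(1+rk)\,ds\,dr$, so that for $\psi\in H^1(D)$
\[
\int_D|\nabla\psi|^2\,dx\,dy=\int_0^L\!\!\int_0^\delta\Big(\tfrac{\psi_s^2}{1+rk}+(1+rk)\psi_r^2\Big)\,dr\,ds,\qquad \int_D\psi^2\,dx\,dy=\int_0^L\!\!\int_0^\delta(1+rk)\psi^2\,dr\,ds.
\]
A short computation using $x(L-s)=-x(s)$ and $y(L-s)=y(s)$ shows that reflection across the $y$–axis corresponds to $(s,r)\mapsto(L-s,r)$, so an odd eigenfunction satisfies $\psi(L-s,r)=-\psi(s,r)$ and in particular $\psi(L/2,r)=0$ for every $r$.

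Next I would discard the nonnegative term $(1+rk)\psi_r^2$ and invoke the pointwise bound $\tfrac{1}{1+rk}\ge B\,(1+rk)$, which is precisely the definition of $B$, to obtain $\int_D|\nabla\psi|^2\ge B\int_0^L\!\int_0^\delta(1+rk)\psi_s^2\,dr\,ds$. It then suffices to prove, for each fixed $r\in(0,\delta)$, the weighted one–dimensional estimate
\[
\int_0^L w\,\psi_s^2\,ds\ge \frac{\pi^2}{L^2}\int_0^L w\,\psi^2\,ds,\qquad w(s):=1+rk(s),
\]
for functions satisfying $\psi(L-s)=-\psi(s)$; integrating in $r$ yields $\mu_1^{odd}(D)\ge B\pi^2/L^2$. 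By antisymmetry (and symmetry $w(L-s)=w(s)$) this reduces to the same inequality on $[0,L/2]$ for functions vanishing at $L/2$, the endpoint $s=0$ being free.

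The heart of the matter, and the step I expect to be the main obstacle, is this last weighted inequality, which is false for convex $k$ and true for concave $k$. I would prove it by Barta's inequality, testing with $\phi(s)=\cos(\pi s/L)$, which satisfies $\phi'(0)=0$ and $\phi(L/2)=0$; a direct computation gives
\[
\frac{-(w\phi')'}{w\phi}=\frac{\pi^2}{L^2}+\frac{\pi}{L}\,\frac{w'}{w}\,\tan\!\Big(\frac{\pi s}{L}\Big).
\]
On $(0,L/2)$ one has $\tan(\pi s/L)>0$, so $\tfrac{-(w\phi')'}{w\phi}\ge \pi^2/L^2$ as soon as $w'=rk'\ge0$ there. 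This is exactly where the hypotheses are used: since $k$ is concave, $k'$ is nonincreasing, and since $k$ is symmetric one has $k'(L/2)=0$; together these force $k'\ge0$ on $[0,L/2]$. The standard Barta argument—writing $\psi=\phi\eta$, integrating by parts, and checking that the boundary terms vanish since $\phi'(0)=0$ and $\psi(L/2)=\phi(L/2)=0$—then upgrades this pointwise lower bound to the desired integral inequality. Finally, when $\gamma$ is a line segment we have $k\equiv0$, hence $B=1$ and $D$ is an $L\times\delta$ rectangle, whose lowest odd Neumann mode $\cos(\pi s/L)$ has eigenvalue exactly $\pi^2/L^2$, so equality holds.
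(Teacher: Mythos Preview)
Your argument is correct and takes a genuinely different route from the paper's. The paper follows the Payne--Weinberger template: it partitions $D$ into thin strips $D_i=\{i\delta/n<\dist_\gamma<(i+1)\delta/n\}$, freezes $r$ on each strip, invokes the classical concave--weight lemma (Lemma~\ref{2}: if $p\ge0$ is concave and $\int vp=0$ then $\int (v')^2p\ge(\pi^2/L^2)\int v^2p$) with $p(s)=1+\tfrac{1+2i}{2}\tfrac{\delta}{n}k(s)$, and then sums and lets $n\to\infty$ to absorb the discretization errors. You bypass the slicing entirely by applying Fubini and proving, for each fixed $r$, the weighted inequality $\int_0^L w\psi_s^2\ge(\pi^2/L^2)\int_0^L w\psi^2$ directly via Barta with the explicit supersolution $\phi=\cos(\pi s/L)$; oddness is used through the pointwise condition $\psi(L/2,r)=0$ rather than the mean--zero condition $\int u\,p\,ds=0$ that the paper exploits.

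Two remarks on what this buys. First, your proof is shorter and avoids the limiting argument altogether. Second, your Barta step only needs $w'\ge0$ on $[0,L/2]$, i.e.\ $k$ symmetric and nondecreasing on $[0,L/2]$; concavity enters only to force $k'(L/2)=0$ and $k'$ nonincreasing, hence $k'\ge0$ on $[0,L/2]$. So your argument in fact establishes the bound under the weaker hypothesis that $k$ is symmetric about $L/2$ and monotone on each half--interval, which the paper's route through Lemma~\ref{2} does not immediately give. One small point worth making explicit in a write--up: the boundary term at $s=L/2$ in the Barta integration by parts vanishes because $\phi\eta^2=\psi^2/\phi\to0$ (both $\psi$ and $\phi$ vanish linearly there, and the eigenfunction is smooth), while the term at $s=0$ vanishes since $\phi'(0)=0$; you mention this, but it is the only place where any care is needed.
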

Thus we give a sharp lower bound on $\mu_1^{odd}$ that is reminiscent of the bound in \cite{PW}, with a correction factor that encodes the relevant geometry of our domains.  We stress that this result falls in the category of lower bounds obtained in \cite{PW,BCT_CPDE,BCT,GU}, since under certain explicit assumptions on $L$ and $\delta$, we show that $\mu_1(D)$ coincides with $\mu_1^{odd}(D)$ (see Propositions \ref{2d} and \ref{2d-1}). Roughly speaking, this phenomenon occurs whenever $\delta$ is sufficiently smaller than $L$. If such a relationship does not hold, Theorem  \ref{1} is still relevant, as $\mu_1^{odd}(D)$ can be realized as the lowest eigenvalue of the Laplacian with mixed boundary conditions on $D^{+}=\{(x,y)\in D:x>0\}$. Sharp bounds for such eigenvalues have been obtained in \cite{AC} (see also \cite[\S 2.5]{B} and \cite{LP}).  Finally, we are able to adapt the argument used to prove Theorem \ref{1} to give the same lower bound on $\mu_1^{odd}$ for certain three-dimensional domains that are not necessarily convex.

The paper is organized as follows.  In \S 2, we prove Theorem \ref{1}.  In \S 3, we give conditions under which $\mu_1^{odd}(D)$ coincides with $\mu_1(D)$, as well as examples illustrating our two-dimensional results.  We extend our two-dimensional results to certain three-dimensional domains in \S 4, and conclude with an appendix that details some of the computations associated with the Fermi coordinate system that we use in our proofs. 

%--------------------------------------------------------------------------------------------------------------------------------------------------------------------------------------------------------------------------------------------------------------------------------------------------------------------------------------------------------------------------------------------------------------

\section{Proof of the main result}

The focus of this section will be the proof of Theorem \ref{1}.  We will introduce a Fermi coordinate system on $D$ and slice $D$ into thin pieces, with the mean value of an odd eigenfunction vanishing on each slice.  Since the slices are thin, we are close to being in a one-dimensional setting and the following lemma from \cite[\S 2]{PW} will play a key role.

\begin{lemma}\label{2}
Let $p(s)$ be a concave, non-negative function on the interval $[0,L]$. Then for any piecewise twice differentiable function $v(s)$ that satisfies
$$
\int_0^L v(s)p(s)ds=0,
$$
it follows that
$$
\int_0^L (v'(s))^2 p(s)ds \ge \frac{\pi^2}{L^2}\int_0^L v(s)^2 p(s)ds.
$$
\end{lemma}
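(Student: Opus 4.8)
The plan is to read the stated inequality as a lower bound on the first nontrivial eigenvalue of a weighted Sturm--Liouville problem, and then to compare that problem with the constant-weight case, where the concavity of $p$ is the decisive ingredient.

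First I would pass from an arbitrary admissible $v$ to the extremal one. By standard compactness in the weighted Sobolev space, the best constant
\[
\lambda_1 := \inf\left\{\frac{\int_0^L (v')^2 p\,ds}{\int_0^L v^2 p\,ds}\;:\; v \text{ piecewise } C^2,\ \int_0^L v\,p\,ds = 0\right\}
\]
is attained by a function $v$ solving the Euler--Lagrange equation $(p v')' + \lambda_1 p v = 0$ (the Lagrange multiplier for the constraint vanishes after integrating the equation over $[0,L]$) together with the natural conditions $p v'(0) = p v'(L) = 0$; at an endpoint where $p$ vanishes this degenerates into a regularity requirement on $v$. It then suffices to prove $\lambda_1 \ge \pi^2/L^2$, since every admissible $v$ obeys $\int_0^L (v')^2 p\,ds \ge \lambda_1 \int_0^L v^2 p\,ds$.

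Next I would extract the nodal structure: by Sturm--Liouville theory the eigenfunction for the first positive eigenvalue $\lambda_1$ has exactly one sign change in $(0,L)$, say at $s=c$. To compare with the constant-weight problem, whose first nontrivial eigenvalue is exactly $\pi^2/L^2$ with eigenfunction $\cos(\pi s/L)$, I would introduce the Riccati variable $m = v'/v$, which satisfies $m' = -\lambda_1 - (p'/p)\,m - m^2$ on each interval where $v \ne 0$, running from $m(0)=0$ to $-\infty$ as $s\uparrow c$, and from $+\infty$ to $m(L)=0$ on $(c,L]$; the analogous quantity for $\cos(\pi s/L)$ obeys the same relation with the term $(p'/p)\,m$ deleted and $\lambda_1$ replaced by $\pi^2/L^2$. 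Concavity enters through the observation that a positive concave $p$ is log-concave, so that $p'/p$ is nonincreasing; equivalently, the Liouville substitution $u=\sqrt{p}\,v$ turns the equation into a Schr\"odinger equation $-u''+Qu=\lambda_1 u$ whose potential $Q = p''/(2p) - (p')^2/(4p^2)$ is nonpositive. Using this monotonicity one argues that, across the full interval $[0,L]$, the weighted eigenfunction must oscillate at least as fast as $\cos(\pi s/L)$, which forces $\lambda_1 \ge \pi^2/L^2$.

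The hard part will be the execution of this comparison, because the first-order term $(p'/p)\,m$ has no fixed sign: on the side of the nodal point where $p$ increases it retards the evolution of $m$ (helping the desired inequality), while on the side where $p$ decreases it accelerates it (hurting it). These two effects cannot be decoupled, and the crux is to use that $p'/p$ is \emph{decreasing} across the whole interval so as to balance the two sides of $c$ simultaneously; it is exactly here that concavity, and not merely positivity, of the weight is indispensable. Finally, taking $p$ constant shows the bound is sharp, with $v=\cos(\pi s/L)$ giving equality -- the one-dimensional shadow of the equality case for a line segment in Theorem \ref{1}.
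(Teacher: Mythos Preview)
Your reduction to the extremal function and the Euler--Lagrange equation $(pv')'+\lambda_1 pv=0$ with $pv'\big|_0^L=0$ is fine, and you are right that this is the starting point. The gap is in the comparison step. The Liouville substitution $u=\sqrt{p}\,v$ does produce $-u''+Qu=\lambda_1 u$ with $Q=\frac{p''}{2p}-\frac{(p')^2}{4p^2}\leq 0$, but that sign goes the \emph{wrong} way for a direct Sturm comparison: a nonpositive potential makes solutions oscillate \emph{faster} at a fixed $\lambda$, which lowers eigenvalues rather than raising them, and the induced Robin conditions on $u$ need not compensate. For instance, with $p(s)=L-s$ both the potential and the Robin parameter at $s=0$ push eigenvalues downward relative to the Neumann, potential-free problem, yet one still has $\lambda_1=j_{1,1}^2/L^2>\pi^2/L^2$; no naive monotonicity argument in the $u$-picture explains this. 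Your Riccati route may be salvageable, but as you yourself note, the first-order term $(p'/p)m$ changes sign across the nodal point and you have not shown how the monotonicity of $p'/p$ actually closes the argument; that balancing \emph{is} the whole proof, and it is not carried out.

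The paper does not prove this lemma (it is quoted from Payne--Weinberger \cite{PW}), but when establishing the variant Lemma~\ref{pw1} it recalls the decisive substitution, which is
\[
w=v'\,p^{1/2}
\]
--- the \emph{derivative} of $v$, not $v$ itself, times $\sqrt{p}$. The point is that $pv'=\sqrt{p}\,w$ vanishes at both endpoints by the natural boundary condition, so $w$ automatically satisfies \emph{Dirichlet} conditions on $[0,L]$ and is admissible for the fixed-string inequality $\int_0^L (w')^2\,ds\geq(\pi^2/L^2)\int_0^L w^2\,ds$. From the eigenvalue equation one has $w'+\frac{p'}{2p}w=-\lambda_1\sqrt{p}\,v$; squaring, integrating, and integrating the cross term by parts leaves
\[
\int_0^L(w')^2\,ds=\lambda_1\int_0^L w^2\,ds+\tfrac12\int_0^L\frac{p''}{p}\,w^2\,ds+(\text{further nonpositive terms}),
\]
so concavity enters simply as $p''\leq 0$ and gives $\int_0^L(w')^2\,ds\leq\lambda_1\int_0^L w^2\,ds$. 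Combining the two inequalities yields $\lambda_1\geq\pi^2/L^2$. This single substitution converts the first \emph{nontrivial Neumann} problem into a first \emph{Dirichlet} problem and bypasses the sign ambiguity in your Riccati term entirely; it, rather than log-concavity of $p$ or the sign of $Q$, is the missing idea.
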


\begin{remark}\label{obs}
Suppose $p$ is also even with respect to $\frac{L}{2}$, and $v(s)$ is a sufficiently smooth function satisfying $v\left(\dfrac L 2\right)=0$.  Define a new function $w(s)$ that is equal to $v(s)$ on $[0, \frac{L}{2}]$, and is equal to the odd reflection of $v(s)$ in the line $s = \frac{L}{2}$ on $[\frac{L}{2},L]$.  Then Lemma \ref{2} applies to $w(s)$ and a straightforward computation shows that
\[
\int_0^{\frac{L}{2}}(w'(s))^2 p(s) ds \geq \frac{\pi^2}{L^2} \int_0^{\frac{L}{2}} w(s)^2 p(s) ds.
\]  
Since $w(s) = v(s)$ on $[0, \frac{L}{2}]$, we may replace $w$ by $v$ in the preceding inequality.  We will use this observation in our proof of Proposition \ref{2d-1}.
\end{remark}

We are now ready to prove Theorem \ref{1}.

\begin{proof}[Proof of Theorem \ref{1}]
Fix $n \in \N$. Let us denote by $\dist_\gamma(x,y)$ the distance of a generic point $(x,y)\in D$ to $\gamma$ and, for any $i=0,\dots,n-1$, by
$$
D_i=\left\{(x,y)\in D:\> \frac{i\,\delta}{n}<\dist_\gamma(x,y)<\frac{(i+1)\,\delta}{n}\right\}.
$$

\noindent Let $u$ be an odd eigenfunction corresponding to $\mu_1^{odd}(D)$ (from now on, for the sake of brevity, we will
omit ``with respect to the $y$-axis"). Using the definition of eigenfunction and a Green's formula, we see that
$$
\mu_1^{odd}(D)=\frac{\dint_D |\nabla u|^2 dxdy}{\dint_D u^2 dxdy};
$$
moreover, the fact that $u$ is odd implies
$$ 
\int_{D_i} u\,dxdy=0 \quad \text{for all } i=0,\dots,n-1.
$$

\noindent We want to evaluate the energy of $u$ in any $D_i$. We construct a Fermi coordinate system $(r,s)$ whereby points $(x,y)$ in $D$ are determined by specifying the distance $r=\dist_\gamma(x,y)$ to the curve $\gamma$, and the arc length $s$ of the point on $\gamma$ nearest to $(x,y)$.  Alternatively, we observe that the co-area formula on the level sets of the distance to $\gamma$ yields the same results. Changing from rectangular to Fermi coordinates (see the Appendix for details), we have
\begin{eqnarray*}
\int_{D_i} |\nabla u|^2 dxdy &=& \int_0^L\left(\int_{\frac{i\,\delta}{n}}^{\frac{(i+1)\,\delta}{n}}\left(\frac{1}{1+rk(s)}u_s^2(r,s)+(1+rk(s))u_r^2(r,s)\right)dr\right)ds
\\
&\ge& \min_{s\in [0,L], \,r\in \left[\frac{i\,\delta}{n},\frac{(i+1)\,\delta}{n}\right]
}\frac{1}{(1+rk(s))^2}\int_0^L\left(\int_{\frac{i\,\delta}{n}}^{\frac{(i+1)\,\delta}{n}}u_s^2(r,s)(1+rk(s))\,dr\right)ds
\\
&\ge&B\int_0^L\left( \int_{\frac{i\,\delta}{n}}^{\frac{(i+1)\,\delta}{n}}u_s^2(r,s)(1+rk(s))\,dr\right)ds ,
\end{eqnarray*}
where the first inequality follows from the hypothesis that $1+\delta k(s)>0$ in $[0,L]$, and the second from the definition of $B$. 
Let us write
$$
\int_0^L\left( \int_{\frac{i\,\delta}{n}}^{\frac{(i+1)\,\delta}{n}}u_s^2(r,s)(1+rk(s))\,dr\right)ds=I_1+I_2,
$$
where
\begin{equation*}
I_1=\int_0^L\left(\int_{\frac{i\,\delta}{n}}^{\frac{(i+1)\,\delta}{n}}\left(u_s^2(r,s)-u_s^2\left(\frac{i\,\delta}{n},s\right)\right)(1+rk(s))dr\right)ds
\end{equation*}
and
\begin{equation*}
I_2=\int_0^L\left(\int_{\frac{i\,\delta}{n}}^{\frac{(i+1)\,\delta}{n}}u_s^2\left(\frac{i\,\delta}{n},s\right)(1+rk(s))dr\right)ds
=\frac{\delta}{n}\int_0^Lu_s^2\left(\frac{i\,\delta}{n},s\right)\left(1+\frac{1+2i}{2} \frac{\delta}{n}k(s) \right)ds.
\end{equation*}
Let $A$ be a common bound for the absolute value of each of $u$ and its first and second derivatives when expressed in Fermi coordinates. Applying the Mean Value Theorem, we deduce that
\begin{equation}\label{i1}
|I_1| \le \frac{2A^2\delta}{n}|D_i|.
\end{equation}

\noindent We will return to $I_2$ in a moment; first, we note that the arguments used above may be applied to show that
$$
\int_{D_i} u^2dxdy=\int_0^L\left(\int_{\frac{i\,\delta}{n}}^{\frac{(i+1)\,\delta}{n}}u^2(r,s)(1+rk(s))\,dr\right)ds=J_1+J_2,
$$
where
\begin{equation*}
J_1=\int_0^L\left(\int_{\frac{i\,\delta}{n}}^{\frac{(i+1)\,\delta}{n}}\left(u^2(r,s)-u^2\left(\frac{i\,\delta}{n},s\right)\right)(1+rk(s))dr\right)ds ,
\end{equation*}
\begin{equation*}
J_2=\int_0^L\left(\int_{\frac{i\,\delta}{n}}^{\frac{(i+1)\,\delta}{n}}u^2\left(\frac{i\,\delta}{n},s\right)(1+rk(s))dr\right)ds
=\frac{\delta}{n}\int_0^Lu^2\left(\frac{i\,\delta}{n},s\right)\left(1+\frac{1+2i}{2} \frac \delta n k(s)\right)ds,
\end{equation*}
and
$$
|J_1|\le \frac{2A^2\delta}{n}|D_i|.
$$

\noindent We will next relate $I_2$ and $J_2$ via Lemma \ref{2}. Using the expression for signed curvature that may be found in the Appendix, it is straightforward to show that $k(s)$ is even with respect to $\frac{L}{2}$.  Since $u$ is odd with respect to $\frac{L}{2}$, we have that
$$
\int_0^Lu\left(\frac{i\,\delta}{n},s\right)\left(1+\frac{1+2i}{2} \frac \delta n k(s) \right)ds=0.
$$
Our hypothesis that $1 + \delta k(s) > 0$ for $s$ in $[0,L]$ implies that $1+\frac{1+2i}{2} \frac \delta n k(s) >0$.  Since we have also assumed that $k(s)$ is concave in $[0,L]$, Lemma \ref{2} implies that
\begin{equation}\label{i2}
I_2 \ge \frac{\pi^2}{L^2}J_2.
\end{equation}

\noindent We now combine the above estimates.  We have
\begin{eqnarray*}
\int_{D_i}|\nabla u|^2dxdy &\ge&B\left(I_1+I_2\right)
\\
&\ge&B\frac{\delta}{n}\left(\int_0^Lu_s^2\left(\frac{i\,\delta}{n},s\right)\left(1+\frac{1+2i}{2} \frac{\delta}{n} k(s) \right)ds-2A^2|D_i|\right)
\\
&\ge& B\frac{\delta}{n}\left( \frac{\pi^2}{L^2}\int_0^Lu^2\left(\frac{i\,\delta}{n},s\right)\left(1+\frac{1+2i}{2} \frac{\delta}{n} k(s) \right)ds-2A^2|D_i|\right),
\end{eqnarray*}
where we used \eqref{i1} and \eqref{i2}, respectively.  Using an equivalent expression for $J_2$, converting back to rectangular coordinates, and subtracting a positive term, we conclude that
\begin{equation*}
\int_{D_i}|\nabla u|^2dxdy \ge B \frac{\pi^2}{L^2}\left(\int_{D_i}u^2\, dxdy-\frac{2A^2\delta}{n}|D_i|\right)-B\frac{2A^2\delta}{n}|D_i|.
\end{equation*}
Summing over $i$, we obtain
$$
\int_D |\nabla u|^2 dxdy \ge B \frac{\pi^2}{L^2} \int_D u^2 dxdy -\frac{C\delta}{n}|D|,
$$
with $C>0$. Taking the limit as $n$ goes to $+\infty$ yields the claim.

Finally, to establish the case of equality, we take $\gamma(s)=\left(\dfrac{L}{2}-s,0\right)$ with $s\in [0,L]$ so that $D=\left(-\dfrac{L}{2},\dfrac{L}{2}\right)\times(0,\delta)$. In this case,   $\mu_1^{odd}(D)=\dfrac{\pi^2}{L^2}$.
\end{proof}

\begin{remark}
If $\gamma$ is a curve as in Theorem \ref{1}, but is closed, it follows from the Four Vertex Theorem that $\gamma$ is a circle and $D$ is an annulus; the eigenvalues of such domains may be found exactly from equations that involve cross products of derivatives of Bessel functions.
\end{remark}

\begin{remark}
If $\gamma$ is part of the boundary of a convex domain $T$, so that $k(s)\ge 0$ in $[0,L]$, then $1+\delta k(s)$ is clearly positive for any choice of  $\delta>0$.  Thus one may remove the restriction on the value of $\delta$ from Theorem \ref{1} for such a $\gamma$. 
\end{remark}

\noindent Some concrete examples to which Theorem \ref{1} applies will be provided at the end of \S 3.

%--------------------------------------------------------------------------------------------------------------------------------------------------------------------------------------------------------------------------------------------------------------------------------------------------------------------------------------------------------------------------------------------------------------

\section{A sufficient condition for $\mu_1(D)=\mu_1^{odd}(D)$}

In this section we give some geometric conditions to ensure that $\mu_1(D)$ coincides with $\mu_1^{odd}(D)$.  Arguments of a similar flavor have been used in \cite{BB,J}.

\begin{proposition}\label{2d}
 Let $\gamma$ and $D$ be as in Theorem \ref{1} and suppose that $\gamma$ may be realized as the graph of a function.  We denote by $\Pi_x(D)=(-P,P)$ the projection of $D$ onto the $x$-axis.  Let $S_{x}$ denote the vertical cross sections of $D$, i.e., $S_{x}=\left\{ (\widetilde{x},\widetilde{y})\in D :\widetilde{x}=x\right\}$, and define $S=\max\limits_{x\in \left[ 0,P\right) }\left\vert
S_{x}\right\vert$.  If
\begin{equation}\label{so}
S^2<P^2\frac{\dint_{D }\sin ^{2}\left( \frac{
\pi }{2P}x\right) dxdy}{\dint_{D }\cos ^{2}\left( \frac{\pi }{2P}x
\right) dxdy},
\end{equation}
then
$$\mu_1(D)=\mu _{1}^{odd}(D).$$ 
\end{proposition}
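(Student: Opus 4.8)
The plan is to exploit the reflection symmetry of $D$ to split the spectrum, and then to compare the smallest odd and even eigenvalues. Since $D$ is symmetric with respect to the $y$-axis, the reflection $(x,y)\mapsto(-x,y)$ is an isometry of $D$ that commutes with the Neumann Laplacian; hence every eigenfunction may be taken to be either even or odd in $x$, and $\mu_1(D)=\min\{\mu_1^{odd}(D),\mu_1^{even}(D)\}$, where $\mu_1^{even}(D)$ denotes the smallest nontrivial eigenvalue admitting an even eigenfunction. As the constant lies in the even sector and every odd function has vanishing mean over $D$, the even sector (restricted to $D^+$) is a pure Neumann problem, while the odd sector corresponds to a mixed Dirichlet--Neumann problem on $D^+$ with Dirichlet data on the axis. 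Since $\mu_1(D)\le\mu_1^{odd}(D)$ always holds, it suffices to prove that, under \eqref{so}, $\mu_1^{odd}(D)\le\mu_1^{even}(D)$.

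For the upper bound on $\mu_1^{odd}(D)$ I would use the trial function $\psi(x,y)=\sin\!\big(\frac{\pi}{2P}x\big)$, which is odd in $x$ and therefore admissible, its mean over $D$ vanishing automatically. A direct computation of its Rayleigh quotient gives
\[
\mu_1^{odd}(D)\le\frac{\pi^2}{4P^2}\,\frac{\dint_{D}\cos^2\!\big(\tfrac{\pi}{2P}x\big)\,dxdy}{\dint_{D}\sin^2\!\big(\tfrac{\pi}{2P}x\big)\,dxdy}=:X,
\]
and one observes that condition \eqref{so} is precisely the statement $X<\frac{\pi^2}{4S^2}$.

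For the lower bound on $\mu_1^{even}(D)$ I would work on $D^+$ and slice it into the vertical cross sections $S_x$. Writing an even eigenfunction $g$ (normalized so that $\int_{D^+}g=0$) as $g=G(x)+(g-G(x))$, where $G(x)=|S_x|^{-1}\int_{S_x}g\,dy$ is the cross-sectional mean, the fluctuation $g-G$ has zero mean on each segment $S_x$. The one-dimensional Neumann--Poincar\'e (Wirtinger) inequality on a segment of length at most $S$ then yields $\int_{D^+}(\partial_y g)^2\ge\frac{\pi^2}{S^2}\int_{D^+}(g-G)^2$, while $G$, being mean-zero against the weight $|S_x|\,dx$ on $(0,P)$, is controlled by a weighted one-dimensional Poincar\'e inequality bounding $\int_0^P G^2|S_x|\,dx$ in terms of $\int_{D^+}(\partial_x g)^2$ and the relevant weighted Neumann eigenvalue. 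Combining the two pieces (they are orthogonal slice by slice) produces a lower bound for $\mu_1^{even}(D)$ as the minimum of $\pi^2/S^2$ and this $x$-eigenvalue. Feeding in \eqref{so} shows that $X$ lies below both quantities: the $y$-fluctuation term is dominated because \eqref{so} gives $X<\pi^2/(4S^2)<\pi^2/S^2$, and the extra factor of $P^2$ rather than $4P^2$ in \eqref{so} supplies the slack needed for the mean term, reflecting the fact that the quarter-wave $\sin(\frac{\pi}{2P}x)$ oscillates half as fast as the first even Neumann mode in $x$. Hence $\mu_1^{odd}(D)\le X\le\mu_1^{even}(D)$, which is the claim.

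The main obstacle I anticipate is making the $x$-reduction rigorous and quantitative: because the cross sections $S_x$ vary with $x$, differentiating the cross-sectional mean $G$ generates boundary contributions, so the passage from $\int_{D^+}(\partial_x g)^2$ to the weighted one-dimensional Dirichlet form for $G$ is not automatic, and one must verify that the resulting weighted Neumann eigenvalue genuinely dominates $X$. This is exactly where the hypothesis that $\gamma$ is a graph, together with the precise normalization of the trial functions $\cos$ and $\sin$, must be used; the remainder of the argument is bookkeeping of the two Poincar\'e inequalities.
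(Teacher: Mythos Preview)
Your upper bound on $\mu_1^{odd}(D)$ via the test function $\sin\!\big(\tfrac{\pi}{2P}x\big)$ is exactly what the paper uses. The gap is in the lower bound for $\mu_1^{even}(D)$. The decomposition $g=G(x)+(g-G)$ handles the fluctuation term cleanly, but the mean term cannot be closed with the tools you invoke. Two concrete problems:
\begin{itemize}
\item Even ignoring the boundary terms you flag, what you would need is a lower bound on the first nontrivial eigenvalue of the weighted Neumann problem $-(\,|S_x|\,G')'=\lambda\,|S_x|\,G$ on $(0,P)$. Nothing in the hypotheses (in particular, nothing in \eqref{so}) controls this eigenvalue: the weight $|S_x|$ is not assumed concave, monotone, or anything else, so this weighted Neumann eigenvalue can in principle be much smaller than $X$. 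Your heuristic that ``the quarter-wave oscillates half as fast as the first even Neumann mode'' is correct only for constant weight.
\item The passage from $\int_{D^+}(\partial_x g)^2$ to a Dirichlet form for $G$ is not bookkeeping. Because the endpoints of $S_x$ move with $x$, one has $\int_{S_x}\partial_x h\,dy\neq 0$ in general, so $\partial_x G$ and $\partial_x h$ are not orthogonal and Leibniz boundary terms enter $G'$. There is no inequality of the form $\int_{D^+}(\partial_x g)^2\ge \int_0^P |S_x|\,(G')^2\,dx$ available here.
\end{itemize}

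The paper sidesteps the mean term entirely by a nodal argument. Assuming (for contradiction) an even eigenfunction $u$ for $\mu_1(D)$, its nodal line meets $\partial D$ in two symmetric points, and the graph hypothesis on $\gamma$ (together with the fact, proved in the paper, that $\gamma^\delta$ is also a graph) forces one of the two nodal domains, say $D_+$, to have its $x$-projection contained in $[-Z,Z]$ for some $Z\le P$. On each vertical segment of $D_+$ the function $u$ therefore vanishes at an endpoint coming from the nodal line; an odd reflection then gives the one-dimensional inequality $\int u_y^2\ge \tfrac{\pi^2}{4S^2}\int u^2$ on that segment. Integrating yields $\mu_1(D)\ge \pi^2/(4S^2)$, which combined with the same test-function upper bound you wrote contradicts \eqref{so}. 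The point is that the nodal line supplies a Dirichlet endpoint on every vertical slice of $D_+$, so no separate ``mean'' term ever appears and no weighted one-dimensional Neumann eigenvalue is needed.
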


\begin{proof} Suppose for the sake of reaching a contradiction that there is no odd eigenfunction
corresponding to $\mu _{1}(D)$. Therefore if $v(x,y)$ is any
eigenfunction corresponding to $\mu _{1}(D)$, then $u(x,y)=v(x,y)+v(-x,y)$ is an eigenfunction that is even. 

We begin by showing that the curve $\gamma^\delta$ parallel to $\gamma$ at distance $\delta$ must also be the graph of a function.  Note that $\gamma(s)$ restricted to either $\left[0, \dfrac{L}{2}\right]$ or $\left[\dfrac{L}{2}, L\right]$ lies in the first quadrant; we assume that $\left[0, \dfrac{L}{2}\right]$ is the relevant interval.  Thus $\gamma(s)$, for $0 \leq s \leq \dfrac{L}{2}$, is the graph of a function in the first quadrant and may be parametrized by $\psi(t) = (T-t, f(T-t))$ for $0 \leq t \leq T$ and some function $f$.  Our parametrization is constructed so that we traverse $\gamma$ with its original orientation.  The curve 
\[
\gamma^\delta(s) = (x(s) + \delta y'(s), y(s) - \delta x'(s)), \text{ for } 0 \leq s \leq \frac{L}{2},
\]
may then be parametrized by
\[
\psi^\delta (t) = \left(T-t-\delta \frac{f'(T-t)}{\sqrt{1+(f'(T-t))^2}}\, , f(T-t) + \delta \frac{1}{\sqrt{1+(f'(T-t))^2}}\right), \ \ 0 \leq t \leq T,
\]
where, as we did for $\gamma^\delta(s)$, we are again translating along a normal vector obtained by rotating our original tangent vector clockwise by $\dfrac{\pi}{2}$.
Taking the derivative with respect to $t$ of the first coordinate of $\psi^\delta(t)$, we find that it equals $-1-\delta k(t)$, where $k(t)$ is the signed curvature of $\psi(t)$.  However, we parametrized $\psi(t)$ so that it would have the same orientation as $\gamma(s)$, so our assumption that $1 + \delta k(s) >0$ implies that $1 + \delta k(t)>0$. Hence the first coordinate of $\psi^\delta(t)$ is strictly decreasing, and we deduce that $\psi^\delta(t)$ is also the graph of a function.

Next we use nodal considerations to restrict our attention to a subset of $D$.  As is well-known (e.g., \cite{CF}), the nodal line $u=0$ is a smooth curve; moreover, it cannot enclose any subdomain of $D$.  Our assumption that $u$ is even implies that the nodal domains corresponding to $u$ are symmetric with respect to the $y$-axis, and Courant's theorem implies that there are exactly two such nodal domains. Thus the nodal line intersects $\partial D$ in exactly two symmetric points and it crosses the $y$-axis at precisely one point inside $D$.
Let $\Pi_x(\{u=0\})=[-Z,Z]$ be the projection of the nodal line onto the $x$-axis.  Since the nodal line is a smooth curve, we see that each vertical line $x=c$, where $-Z \leq c \leq Z$, intersects the nodal line.  Let
\[
D _{+}=\left\{ (x,y)\in D :u(x,y)>0\right\} \text{ and } D_{-} = \left\{ (x,y)\in D :u(x,y)<0\right\}.
\]
We claim that the projection of at least one of $D_{+}$ and $D_{-}$ onto the $x$-axis is contained in $[-Z,Z]$.  If this were not the case, we could find points $(x_1, y_1) \in D_{+}$ and $(x_2, y_2) \in D_{-}$ with $|x_1|, |x_2|>Z$.  Since $u$ is even, we may assume that $x_1, x_2 >Z$.  We claim that we may connect $(x_1, y_1)$ to $(x_2, y_2)$ via a path whose $x$-coordinate is always strictly larger than $Z$. 
Define 
\[
\gamma^r(s) = (x(s) + r y'(s), y(s) - r x'(s)), \text{ for } 0 \leq s \leq \frac{L}{2} \text{ and } 0 \leq r \leq \delta.
\]
Note that, for $i=1,2$, we have $(x_i, y_i)=\gamma^{r_i}(s_i)$ for some $r_i \in [0, \delta]$ and some $s_i \in [0,\dfrac{L}{2}]$.  Fixing $s$ and letting $r$ vary between $0$ and $\delta$, we see that $\gamma^r(s)$ traces out a line segment.  Thus we may travel along such line segments from $\gamma^{r_i}(s_i)$ to either $\gamma(s_i)$ or $\gamma^{\delta}(s_i)$ for $i=1,2$ in such a way that the $x$-coordinate remains strictly greater than $Z$.  Our path from $\gamma^{r_1}(s_1)$ to $\gamma^{r_2}(s_2)$ is then completed by traveling appropriately along the boundary; we know that the boundary portion of our path has $x$-coordinate strictly greater than $Z$ because $\gamma$ and $\gamma^{\delta}$ are both graphs of functions, and $\gamma^r(0)$ is a line segment.  By the Intermediate Value Theorem, the nodal line intersects this path, which is a contradiction. Thus the projection of at least one of $D_{+}$ and $D_{-}$ onto the $x$-axis is contained in $[-Z,Z]$; replacing $u$ with $-u$ as needed, we may assume that the projection of $D_{+}$ is contained in $[-Z,Z]$. 

We will now use $D_{+}$ to find a lower bound on $\mu_1(D)$.  We have
\begin{equation}
\mu _{1}(D)=\frac{\dint_{D _{+}}\left\vert \nabla u\right\vert ^{2}dxdy}{
\dint_{D _{+}}u^{2}dxdy}\geq \frac{\dint_{D _{+}}u_y^2\,dxdy}{\dint_{D _{+}}u^{2}dxdy}=\frac{
\dint_{\Pi_x(D_{+})} \left( \dint_{S_{x}\cap D_+}u_y^2\,dy\right) dx}{\dint_{D _{+}}u^{2}dxdy}. \label{Fubini}
\end{equation}
For almost every $x$ we have 
\begin{equation*}
S_x\cap D_{+}=\bigcup_{j=1}^{\infty} I_j^x,
\end{equation*}
where for any $j$, $I_j^x$ is an open interval such that $u$ vanishes at one or both endpoints of $I_j^x$. The boundary condition is potentially unknown at one of the endpoints of $I_j^x$, but we may take an odd reflection of $u$ in the Dirichlet end of $I_j^x$.  Thus $u$ has mean value equal to zero on the doubled $I_j^x$, and we have

$$
\int_{I_x^j}u_y^2\, dy \ge \frac{\pi^2}{4|I_x^j|^2}\int_{I_x^j}u^2dy \ge \frac{\pi^2}{4S^2}\int_{I_x^j}u^2dy.
$$
This last consideration, together with (\ref{Fubini}), yields
\[
\mu _{1}(D)\geq \frac{\pi ^{2}}{4S^2}.
\]
On the other hand, choosing $\sin \left( \frac{\pi }{2P}x\right) $ as a test function for $\mu _{1}(D)$ we obtain
\[
\mu _{1}(D)\leq \frac{\pi ^{2}}{4P^{2}}\frac{\dint_{D }\cos ^{2}\left( 
\frac{\pi }{2P}x\right) dxdy}{\dint_{D }\sin ^{2}\left( \frac{\pi }{2P}x
\right) dxdy};
\]
combining these two inequalities on $\mu_1(D)$, we see that we have a contradiction to our hypothesis \eqref{so}. 
\end{proof}

\begin{remark} Proposition \ref{2d} can be stated in different ways depending
on the choice of the test function used to obtain the upper bound for $\mu
_{1}(D)$. A rough estimate can be obtained by choosing $x$ as a test function. In this case condition \eqref{so} becomes
\begin{equation*}
S^{2}<\frac{\pi ^{2}}{4}\dfrac{\dint_{D }x^{2}dxdy}{\left\vert D
\right\vert }.  
\end{equation*}
Since our domain $D$ has a special shape, we can alternatively use $\cos\left(\frac{\pi}{L}s\right)$ as a test function in the Rayleigh quotient written in Fermi coordinates and \eqref{so} becomes
\begin{equation}\label{cos}
S^2<\frac{L^2}{4}\frac{\dint_0^L\left(\dint_0^\delta \cos^2\left(\frac{\pi}{L} s\right)(1+rk(s))dr\right)ds}{\dint_0^L\left(\dint_0^\delta \sin^2\left(\frac{\pi}{L} s\right)\frac{1}{1+rk(s)}dr\right)ds}.
\end{equation}
 \end{remark}
 
 \bigskip

In the next proposition we show that, if $\gamma$ is not the graph of a one-dimensional function, it is still possible to give a condition ensuring that $\mu_1(D)=\mu_1^{odd}(D)$. 

\begin{proposition}\label{2d-1}
 Let $\gamma$ and $D$ be as in Theorem \ref{1}. Then
$$\mu_1(D)=\mu _{1}^{odd}(D)$$
if one of the following alternatives holds:
 \begin{enumerate}
 \item  $k(s)\ge0$ for all $s \in [0,L]$ and 
 \begin{equation}\label{delta}
 \underset{s\in[0,L]}{\max}\delta^2 (2+\delta k(s))^2<  \dfrac{L^2}{\pi^2}\> \frac{\dint_0^L\left(\dint_0^\delta \cos^2\left(\frac{\pi}{L} s\right)(1+rk(s))dr\right)ds}{\dint_0^L\left(\dint_0^\delta \sin^2\left(\frac{\pi}{L} s\right)\frac{1}{1+rk(s)}dr\right)ds};
\end{equation}

\item $k(s)<0$ for all $s \in [0,L]$ and
\begin{equation}\label{delta1}
 \underset{s\in[0,L]}{\max}\dfrac{4\delta^2}{ (1+\delta k(s))^2}<  \dfrac{L^2}{\pi^2}\> \frac{\dint_0^L\left(\dint_0^\delta \cos^2\left(\frac{\pi}{L} s\right)(1+rk(s))dr\right)ds}{\dint_0^L\left(\dint_0^\delta \sin^2\left(\frac{\pi}{L} s\right) \frac{1}{1+rk(s)}dr\right)ds}; or
\end{equation}

\item $k(s)$ changes its sign in $[0,L]$, and
\begin{equation}\label{delta2}
\max\left\{  \underset{s\in[0,L]}{\max}\delta^2 (2+\delta k(s))^2,  \underset{s\in[0,L]}{\max}\dfrac{4\delta^2}{ (1+\delta k(s))^2}\right\}<  \dfrac{L^2}{\pi^2}\> \frac{\dint_0^L\left(\dint_0^\delta \cos^2\left(\frac{\pi}{L} s\right)(1+rk(s))dr\right)ds}{\dint_0^L\left(\dint_0^\delta \sin^2\left(\frac{\pi}{L} s\right)\frac{1}{1+rk(s)}dr\right)ds}.
\end{equation}
 \end{enumerate}
  \end{proposition}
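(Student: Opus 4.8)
The plan is to run the same contradiction scheme as in Proposition \ref{2d}, but with all of the geometry read off in the Fermi coordinates $(r,s)$, in which the reflection across the $y$-axis is $(r,s)\mapsto(r,L-s)$, the domain becomes the curvilinear rectangle $(0,\delta)\times(0,L)$, and the area element is $(1+rk(s))\,dr\,ds$. Suppose, for contradiction, that $\mu_1(D)$ has no odd eigenfunction. Then for any eigenfunction $v$ the symmetrized function $u(x,y)=v(x,y)+v(-x,y)$ is a nonzero eigenfunction that is even, i.e. $u(r,L-s)=u(r,s)$; by Courant's theorem $u$ has exactly two nodal domains, and each is symmetric about $s=\frac{L}{2}$. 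The upper bound is the easy half: since $k$ is even and $\cos\!\left(\frac{\pi}{L}s\right)$ is odd about $s=\frac{L}{2}$, the function $\cos\!\left(\frac{\pi}{L}s\right)$ has zero mean over $D$, so testing the Rayleigh quotient with it and computing in Fermi coordinates gives
\[
\mu_1(D)\le \frac{\pi^2}{L^2}\,\frac{\dint_0^L\left(\dint_0^\delta \sin^2\!\left(\frac{\pi}{L}s\right)\frac{1}{1+rk}\,dr\right)ds}{\dint_0^L\left(\dint_0^\delta \cos^2\!\left(\frac{\pi}{L}s\right)(1+rk)\,dr\right)ds}.
\]
The right-hand side is exactly the reciprocal of the common quantity on the right of \eqref{delta}, \eqref{delta1} and \eqref{delta2}, so it remains only to produce a lower bound for $\mu_1(D)$ that beats it.

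For the lower bound I would restrict the Rayleigh quotient to a single nodal domain $D_+$, chosen so that every normal segment $\{s=\text{const}\}$ meeting $D_+$ actually crosses the nodal line; discarding the tangential part of the gradient via $|\nabla u|^2\ge u_r^2$ gives $\dint_{D_+}|\nabla u|^2\,dxdy\ge \dint_0^L\left(\dint u_r^2(1+rk)\,dr\right)ds$. On each such segment $u$ vanishes at the point where the nodal line crosses it, so an odd reflection there (exactly the device of Remark \ref{obs}) makes $u$ have weighted mean zero on the doubled segment, and a one-dimensional Poincar\'e inequality on an interval of weighted length at most $\ell$ yields $\dint u_r^2(1+rk)\,dr\ge \ell^{-2}\dint u^2(1+rk)\,dr$. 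Integrating in $s$ gives $\mu_1(D)\ge \ell_{\max}^{-2}$, where $\ell_{\max}$ bounds the weighted length of the doubled slices. The role of the three cases is precisely to estimate this length: for $k\ge0$ the weight $1+rk$ is increasing and its contribution is controlled by its integral, giving $\ell_{\max}\le 2\dint_0^\delta(1+rk)\,dr=\delta(2+\delta k)$ as in \eqref{delta}; for $k<0$ the weight is decreasing and one must instead control its reciprocal, giving $\ell_{\max}\le \frac{2\delta}{1+\delta k}$ as in \eqref{delta1}; and when $k$ changes sign neither estimate dominates, so one takes the maximum of the two, which is \eqref{delta2}.

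Putting the two halves together, each of the hypotheses \eqref{delta}--\eqref{delta2} says exactly that $\ell_{\max}^{-2}$ strictly exceeds the upper bound displayed above, so we obtain $\mu_1(D)\ge \ell_{\max}^{-2}>\mu_1(D)$, a contradiction. Hence $\mu_1(D)$ does admit an odd eigenfunction and $\mu_1(D)=\mu_1^{odd}(D)$. I expect the lower bound, not the upper one, to be where the work lies. The delicate points are two: first, establishing enough about the nodal set of the even eigenfunction in Fermi coordinates to guarantee the existence of a nodal domain all of whose normal slices meet the nodal line, so that the reflection step is legitimate; and second, pushing the non-constant weight $1+rk(s)$ through the one-dimensional inequality so that the doubled slice length is bounded \emph{sharply in form} by $\delta(2+\delta k)$ when $k\ge0$ and by $\frac{2\delta}{1+\delta k}$ when $k<0$, uniformly in $s$. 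Checking that Remark \ref{obs} survives the presence of this weight, and that the constant it produces is the one recorded in \eqref{delta}--\eqref{delta2}, is the real bookkeeping at the heart of the argument.
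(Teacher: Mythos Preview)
Your overall plan---contradiction, upper bound via the test function $\cos(\pi s/L)$ in Fermi coordinates, lower bound by restricting to a nodal domain and slicing in the $r$-direction---matches the paper's, and you correctly identify the two places where the work lies. But the paper resolves both of those places differently from what you sketch, and your version of each has a real gap.

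\textbf{Nodal domain selection.} You want to find $D_+$ so that every normal slice $\{s=\text{const}\}$ meeting $D_+$ is cut by the nodal line, then reflect at that (moving) intersection point. You flag this as delicate, and it is: there is no obvious reason the nodal line must cross every such slice. The paper sidesteps this entirely. It observes that the two symmetric endpoints $P_L,P_R$ of the nodal line lie either both on $\gamma$, both on $\gamma^\delta$, or both on the side segments $S$, and in each case introduces the mixed Dirichlet--Neumann eigenvalue $\lambda^{ND}(D)$ with Neumann on the arc $\wideparen{P_LP_R}$ and Dirichlet on the rest of $\partial D$. Since $u_+$ vanishes on $\partial D\setminus\wideparen{P_LP_R}$, it is admissible for $\lambda^{ND}(D)$, giving $\mu_1(D)\ge\lambda^{ND}(D)$. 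One then works not with $u$ but with the mixed eigenfunction $\psi$, which by construction vanishes on an \emph{entire} coordinate curve: $\psi(\delta,s)=0$ for all $s$ in subcase $(i)$, and $\psi(0,s)=0$ for all $s$ in subcases $(ii)$--$(iii)$. This eliminates any need to track the nodal line inside $D$.

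\textbf{Weighted one-dimensional inequality.} Your reflection step requires the weight $1+rk(s)$ to be even about the reflection point, which fails if that point is an interior nodal crossing. The paper avoids this because $\psi$ vanishes at an endpoint $r=0$ or $r=\delta$. When the Dirichlet end is $r=\delta$ and $k\ge0$, one even-extends the weight and odd-extends $\psi$ about $r=\delta$ and applies Remark~\ref{obs} directly, yielding $\pi^2/(4\delta^2)$. When the Dirichlet end is $r=0$ (or when $k<0$), the extended weight is no longer concave, and the paper instead invokes a weighted Hardy-type inequality of Maz'ya \cite[p.~40, Thm.~1]{M}: the best constant $C(s)$ in $\int_0^\delta\psi_r^2(1+rk)\,dr\ge C(s)\int_0^\delta\psi^2(1+rk)\,dr$ is bounded below by $1/(4B^2(s))$, where $B^2(s)$ is an explicit supremum of products of partial integrals of $1+rk$ and $1/(1+rk)$. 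An elementary calculus computation of that supremum then gives $C(s)\ge 1/(\delta^2(2+\delta k(s))^2)$ when $k(s)>0$ and $C(s)\ge (1+\delta k(s))^2/(4\delta^2)$ when $k(s)<0$. These are exactly the constants in \eqref{delta}--\eqref{delta2}; your heuristic ``weighted length of the doubled slice'' happens to recover the same numbers but does not correspond to a rigorous Poincar\'e inequality with a non-symmetric weight.
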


\begin{proof}
As in the proof of Proposition \ref{2d}, suppose for the sake of reaching a contradiction that there is no odd eigenfunction
corresponding to $\mu _{1}(D)$. Therefore if $v(x,y)$ is any
eigenfunction corresponding to $\mu _{1}(D)$, then $u(x,y)=v(x,y)+v(-x,y)$ is an eigenfunction that is even. 

Denote $\partial D=\gamma \cup \gamma^\delta \cup S$, where $S$ is the union of the two segments joining $\gamma$ and $\gamma^\delta$, and let $\{P_L,P_R\}=\partial D \cap \{u=0\}$. Of course, $P_L$ and $P_R$ are symmetric points with respect to the $y$-axis.
Exactly one of the following cases occurs:
\begin{itemize}
\item[$(i)$] $P_L,P_R \in \gamma$;
\item[$(ii)$] $P_L,P_R \in S$; or
\item[$(iii)$] $P_L,P_R\in \gamma^\delta$.
\end{itemize}

We begin by treating case (1) in the statement of Proposition \ref{2d-1}; we will analyze subcase $(i)$ first, and then handle subcases $(ii)$ and $(iii)$ together.
We denote by $\lambda^{ND}(D)$ the lowest eigenvalue of the following mixed Dirichlet-Neumann problem:
\begin{equation}\label{ND}
\left\{
\begin{array}{ll}
-\Delta \psi = \lambda \psi & \mbox{in}\> D
\\ \\
\frac{\partial \psi}{\partial \mathbf{n}}=0 &\mbox{on} \>\wideparen{P_LP_R}
\\ \\
\psi=0 & \mbox{on} \>\partial D\setminus \wideparen{P_LP_R}
\end{array}
\right.
\end{equation}
where $\wideparen{P_LP_R}$ is the connected portion of $\gamma$ with endpoints $P_L$ and $P_R$. Without loss of generality we may assume that $u>0$ in $D_+$, where $\partial D_+\cap \gamma =\wideparen{P_LP_R}$. Let $u_+$ denote the positive part of $u$. Using $u_+$ as a test function in the variational characterization of $\lambda^{ND}(D)$, we obtain
\begin{equation}\label{bb}
\mu_1(D)=\dfrac{\int_{D_+} |\nabla u|^2 dxdy}{\int_{D_+} u^2 dxdy}=\dfrac{\int_{D} |\nabla  u_+|^2 dxdy}{\int_{D}  u_+^2 dxdy}\ge \lambda^{ND}(D)=\dfrac{\int_{D} |\nabla \psi|^2 dxdy}{\int_{D} \psi^2 dxdy},
\end{equation}
where $\psi$ is an eigenfunction of problem \eqref{ND} corresponding to $\lambda^{ND}(D).$
By using a Fermi coordinate system  we can estimate the last term in \eqref{bb}, obtaining
\begin{equation}\label{Fubini1}
\mu _{1}(D)\geq \frac{\dint_0^L \left(\dint_0^\delta \psi_r^2 (1+rk(s))dr\right)ds}{\dint_0^L \left(\dint_0^\delta \psi^2 (1+rk(s))dr\right)ds}.
\end{equation}
Note that if $r=\delta$, then $\psi=0$ for any $s\in [0,L]$. To estimate the integral $\dint_0^{\delta}\psi_r^2(1+rk(s))dr$, we consider the odd and even extensions (with respect to $\delta$) of $\psi$ and $1+rk(s)$ to $[0,2\delta]$, respectively. Since $k(s)\ge 0$, the latter extension is concave in $r$. Hence Remark \ref{obs} implies 
$$
\int_0^{\delta}\psi_r^2(1+rk(s))dr\geq \frac{\pi^2}{4\delta^2}\int_0^{\delta}\psi^2(1+rk(s))dr.
$$
Integrating with respect to $s$ gives
\begin{equation*}
\dint_0^L\left(\dint_0^\delta \psi_r^2 (1+rk(s))dr\right)ds \ge  \dfrac{\pi^2}{4\delta^2}\dint_0^L\left(\dint_0^\delta \psi^2 (1+rk(s))dr\right)ds,
\end{equation*}
and combining this inequality with \eqref{Fubini1} yields
\begin{equation}\label{lb1}
\mu_1(D) \ge \frac{\pi^2}{4\delta^2} \ge \dfrac{1}{\underset{s\in [0,L]}{\max} \delta^2(2+\delta k(s))^2},
\end{equation}
with the last inequality holding by the non-negativity assumption on $k$. 
On the other hand, choosing $\cos\left(\dfrac{\pi}{L}s\right)$ as test function in the variational characterization of $\mu_1(D)$ where the Rayleigh quotient is written in Fermi coordinates, we obtain
$$
\mu_1(D) \le \frac{\pi^2}{L^2} \frac{\dint_0^L\left(\dint_0^\delta \sin^2\left(\frac{\pi}{L} s\right)\frac{1}{1+rk(s)}dr\right)ds}{\dint_0^L\left(\dint_0^\delta \cos^2\left(\frac{\pi}{L} s\right)(1+rk(s))dr\right)ds},
$$
reaching a contradiction.

In subcase $(ii)$, define $\wideparen{P_LP_R}$ to be the path on $\partial D$ connecting $P_L$ and $P_R$ with nonempty intersection with $\gamma^\delta$; in subcase $(iii)$, define $\wideparen{P_LP_R}$ to be the path on $\partial D$ connecting $P_L$ and $P_R$ that has empty intersection with $S$. We denote by $\lambda^{ND}(D)$ the lowest eigenvalue of the mixed Dirichlet-Neumann problem given by \eqref{ND}. Without loss of generality we may assume that $\partial D_+\cap (\gamma^\delta\cup S)=\wideparen{P_LP_R}$. We proceed as in subcase $(i)$ through \eqref{Fubini1}. Note that if $r=0$, then $\psi=0$ for any $s \in [0,L]$ since $\wideparen{P_LP_R} \cap \gamma = \emptyset$.  Suppose $k(s)=0$ so that we wish to estimate $\dint_0^{\delta}\psi_r^2dr$; we consider the odd extension (with respect to 0) of $\psi$ to $[-\delta,\delta]$. Then Remark \ref{obs} implies 
\begin{equation}\label{iii}
\int_0^{\delta}\psi_r^2dr\geq \frac{\pi^2}{4\delta^2}\int_0^{\delta}\psi^2dr.
\end{equation}
Suppose $k(s)>0$ and define
\begin{eqnarray}\label{b1squared}
B_1^2(s)&=&\max_{r \in [0,\delta] }\left(\int_r^\delta (1+tk(s))dt\right)\left(\int_0^r \dfrac{1}{1+tk(s)}dt\right)
\\
&=&
\displaystyle\max_{r \in [0,\delta] } (\delta -r)\left(1+(\delta+r)\dfrac{k(s)}{2}\right)\dfrac{\log(1+rk(s))}{k(s)}. \nonumber
\end{eqnarray}
By \cite[p. 40, Thm. 1]{M}, we have 
\begin{equation}\label{bar}
\dint_0^\delta \psi_r^2 (1+rk(s))dr \ge C_1(s) \dint_0^\delta \psi^2 (1+rk(s))dr ,
\end{equation}
where 
$$ C_1(s) \ge \dfrac{1}{4B_1^2(s)}.$$
Set
$$b_1(r)=(\delta -r)\left(1+(\delta+r)\dfrac{k(s)}{2}\right)\dfrac{\log(1+rk(s))}{k(s)}.$$
Then $b_1(0)=b_1(\delta)=0$ and
$$
b_1'(r)=-\left(1+(\delta+r)\dfrac{k(s)}{2}\right)\dfrac{\log(1+rk(s))}{k(s)}+(\delta-r)\dfrac{k(s)}{2}\dfrac{\log(1+rk(s))}{k(s)}+(\delta - r)\left(1+(\delta+r)\dfrac{k(s)}{2}\right)\dfrac{1}{1+rk(s)}.
$$
Thus, if $\bar r\in (0,\delta)$ is a maximum point for $b_1$, so that $b_1'(\bar r)=0$, then we have
$$
\dfrac{\log(1+\bar r k(s))}{k(s)}=(\delta - \bar r)\left(1+(\delta+\bar r)\dfrac{k(s)}{2}\right)\dfrac{1}{(1+\bar r k(s))^2}.
$$
This implies that
\begin{eqnarray}
 B_1^2(s)&=&\max_{r \in [0,\delta]}b_1(r)=(\delta - \bar r)^2\left(1+(\delta +\bar r)\dfrac{k(s)}{2}\right)^2 \dfrac{1}{(1+\bar r k(s))^2} \label{b1}
  \\
 &\le & \delta^2 \left(1+\delta\dfrac{k(s)}{2}\right)^2, \label{b11} \notag
\end{eqnarray}
and hence
\begin{equation}\label{cs}
C_1(s) \ge \dfrac{1}{\delta^2(2+\delta k(s))^2}.
\end{equation}

Using \eqref{iii},\eqref{bar}, and \eqref{cs}, we deduce that
$$
\dint_0^\delta \psi_r^2 (1+r k(s))dr \ge  \left\{\begin{array}{ll} \dfrac{\pi^2}{ 4\delta^2}\dint_0^\delta \psi^2 (1+rk(s))dr & \mbox{if}\> k(s)=0, \\ \\ \dfrac{1}{\delta^2(2+ \delta k(s))^2}\dint_0^\delta \psi^2 (1+ r k(s))dr & \mbox{if}\> k(s)>0, \end{array}\right. 
$$
and therefore
$$
\dint_0^\delta \psi_r^2 (1+r k(s))dr \geq \frac{1}{\delta^2(2+\delta k(s))^2} \dint_0^\delta \psi^2 (1+rk(s))dr.
$$
Combining this inequality with \eqref{Fubini1} yields
\begin{equation}\label{lb2}
\mu_1(D) \geq \dfrac{1}{\underset{s\in [0,L]}{\max} \delta^2(2+\delta k(s))^2}.
\end{equation}
We conclude as in subcase $(i)$, thus completing case (1).

Next we treat case (2).  In all three subcases, we proceed as in case (1) through \eqref{Fubini1}. In order to estimate from below the ratio on the right-hand side in \eqref{Fubini1}, we  will again use \cite[p. 40, Thm. 1]{M}.  In subcase $(i)$, we take $\mu=\nu=(1 + (\delta-x)k(s))\textbf{1}_{[0,\delta]}(x)$ in that statement.  Denoting
\begin{eqnarray*}
B_2^2(s) &=&\max_{r \in [0,\delta] }\left(\int_0^{\delta-r} (1+t k(s))dt\right)\left(\int_{\delta-r}^\delta \dfrac{1}{1+t k(s)}dt\right)
\\
&=&\max_{r \in [0,\delta] } (\delta -r)\left(1+(\delta-r)\dfrac{k(s)}{2}\right)\dfrac{1}{(-k(s))}\log\left(\dfrac{1+(\delta-r)k(s)}{1+\delta k(s)}\right),
\end{eqnarray*}
we have
\begin{equation}\label{bar2}
\dint_0^\delta \psi_r^2 (1+r k(s))dr \ge C_2(s) \dint_0^\delta \psi^2 (1+r k(s))dr,
\end{equation}
where 
$$ C_2(s) \ge \dfrac{1}{4B_2^2(s)}.$$
Set
$$
b_2(r)=(\delta -r)\left(1+(\delta-r)\dfrac{k(s)}{2}\right)\dfrac{1}{(-k(s))}\log\left(\dfrac{1+(\delta-r)k(s)}{1+\delta k(s)}\right).
$$
Then $b_2(0)=b_2(\delta)=0$ and
\begin{eqnarray*}
b_2'(r)&=&-\left(1+(\delta-r)\dfrac{k(s)}{2}\right)\dfrac{1}{(-k(s))}\log\left(\dfrac{1+(\delta-r)k(s)}{1+\delta k(s)}\right)
\\
&&+\dfrac{(\delta-r)}{2}\log\left(\dfrac{1+(\delta-r)k(s)}{1+\delta k(s)}\right)+(\delta -r)\left(1+(\delta-r)\dfrac{k(s)}{2}\right)\dfrac{1}{1+(\delta-r)k(s)}.
\end{eqnarray*}
Thus, if $\bar r \in (0,L)$ is a maximum point for $b_2$, so that $b_2'(\bar r)=0$, then we have
$$
\dfrac{1}{(-k(s))}\log\left(\dfrac{1+(\delta-\bar{r})k(s)}{1+\delta k(s)}\right)=(\delta- \bar{r})\left(1+(\delta-\bar{r})\dfrac{k(s)}{2}\right)\dfrac{1}{(1+(\delta-\bar{r})k(s))^2}.
$$
This implies that
\begin{eqnarray*}
B_2^2(s)&=&\max_{r \in [0,L]} b_2(r)=(\delta-\bar r)^2\left(1+(\delta-\bar r)\dfrac{k(s)}{2}\right)^2\dfrac{1}{(1+(\delta-\bar r)k(s))^2},
\end{eqnarray*}
and hence
$$
C_2(s) \ge \dfrac{(1+\delta k(s))^2}{4\delta^2}.
$$
Combining this inequality with \eqref{bar2} implies
$$
\dint_0^\delta \psi_r^2 (1+rk(s))dr \ge \dfrac{(1+\delta k(s))^2}{ 4\delta^2} \dint_0^\delta \psi^2 (1+ r k(s))dr;
$$
hence from \eqref{Fubini1} we deduce that
\begin{equation}\label{lb3}
\mu_1(D) \ge \min_{s\in [0,L]}\dfrac{(1+\delta k(s))^2}{4\delta^2}.
\end{equation}

If we are in subcase $(ii)$ or $(iii)$, we note that if $r=0$, then $\psi=0$ for all $s \in [0,L]$.  We define $B_1^2(s)$ as in \eqref{b1squared} and follow that argument through \eqref{b1}.  Then \eqref{b11} may be replaced by
$$
B_1^2(s) \le \dfrac{\delta^2}{(1+\delta k(s))^2}
$$
and we conclude as in subcase $(i)$.

For case (3), we combine cases (1) and (2). We note that if $k(s)=0$ on a set $I \subseteq [0,L]$ with positive measure, then \eqref{iii} holds true for every $s \in I$. 
\end{proof}

\begin{remark}
In \cite{PW}, Payne and Weinberger establish a lower bound for $\mu_1(D)$ when $D$ is convex. As an application, they obtain a pointwise estimate for the solution $u$ to the interior Neumann problem:
\begin{equation}\nonumber
\left\{
\begin{array}{ll}
\Delta u=0 &\mbox{in}\>D,
\\ \\
\frac{\partial u}{\partial \mathbf{n}}=g &\mbox{on}\> \partial D,
\end{array}
\right.
\end{equation}
in terms of square integrals of $\frac{\partial u}{\partial \mathbf{n}}$ and $\mu_1(D)$. The argument of \cite{PW} that leads to the pointwise estimate holds for non-convex domains, so combining Theorem \ref{1} and Proposition \ref{2d-1} with the argument of \cite{PW} yields analogous pointwise estimates for domains $D$ satisfying the hypotheses of Theorem \ref{1} and such that $\mu_1(D) = \mu_1^{odd}(D)$.

\end{remark}

\begin{remark}
Let $\gamma$ and $D$ be as in Proposition \ref{2d-1}, assuming in particular that one of the conditions \eqref{delta}, \eqref{delta1} or \eqref{delta2} is fulfilled.   Then we claim that $\mu_1(D)$ is simple.  Indeed, we may view $\mu_1^{odd}(D)$ as the smallest eigenvalue of a mixed Dirichlet-Neumann eigenvalue problem with Dirichlet boundary conditions on $\{x=0\} \cap (D \cup \partial D)$ and Neumann boundary conditions on $\{x>0\} \cap \partial D$.  The smallest eigenvalue of such a mixed problem is always simple, so there exists a unique (up to a multiplicative constant) odd eigenfunction whose Rayleigh quotient is equal to  $\mu_1^{odd}(D)$.  If $\mu_1(D)=\mu_1^{odd}(D)$ were not simple, then we could create an even eigenfunction corresponding to $\mu_1(D)$ as done at the beginning of the proof of Prop. \ref{2d-1} and reach a contradiction.  
\end{remark}

We have thus established various conditions under which $\mu_1(D) = \mu_1^{odd}(D)$.  For a given domain $D$, we may combine a relevant condition with the lower bound on $\mu_1^{odd}(D)$ given by Theorem \ref{1} to give an explicit and easily computable lower bound on $\mu_1(D)$. We illustrate this idea with several examples.

\medskip

\begin{example}[Annular sector]
Let $R>0$ and consider the annular sector 
$$D=\{\rho e^{i\theta}: \> R < \rho < R+\delta, \> \frac{\pi}{2}-\alpha < \theta <\frac{\pi}{2}+\alpha\},$$ 
with $\alpha \in (0,\pi)$.  
Then condition \eqref{delta} in Proposition \ref{2d-1} becomes 
$$
(2R\delta + \delta^2) \left(\log \left(\frac{R+\delta}{R}\right)\right) < \frac{2\alpha^2R^2}{\pi^2}.
$$
Thus, if $\delta$ satisfies this inequality, Theorem \ref{1} tells us that
$$
\mu_1(D)=\mu_1^{odd}(D) \geq \frac{ \pi^2}{4\alpha^2(R+\delta)^2}.
$$
Note that we have given a lower bound on $\mu_1(D) = \mu_1^{odd}(D)$ without any reference to zeros of Bessel functions.
\end{example}

\begin{example}[Arch of catenary]
 Let $a >0$ and consider the arch of catenary $\gamma(s)=(x(s),y(s))$, $s\in [0,2 \sinh a]$, where
$$
\left\{\begin{array}{ll}
x(s)&=\arcsinh (s-\sinh a) \\
y(s)&=\sqrt{1+(s-\sinh a)^2}.
\end{array}
\right.
$$
We find $k(s)=\frac{1}{1+(s-\sinh a)^2}$, and $k$ is concave if $a\le \arcsinh\left(\frac{1}{\sqrt{3}}\right)$.  Since $k(s)>0$ for all $s$, any positive $\delta$ will satisfy the constraint given by Theorem \ref{1}.  If $\delta$ is small enough for condition \eqref{so} in Proposition \ref{2d} to hold,  Theorem \ref{1} gives the following explicit bound:
$$
\mu_1(D)=\mu_1^{odd}(D)\ge \frac{\pi^2}{4(1+\delta)^2\sinh^2 a}.
$$
\end{example}

\begin{example}[Handlebar moustache]
We begin by considering the following concave function on the interval $[0,1.6]$:
$$
k(s)=\left\{
\begin{array}{ll}
50 s-25 & \mbox{if}\> \> 0 \le s \le 0.6, \\
5 & \mbox{if}\> \>0.6 \le s \le 1, \\
-50 s+55 & \mbox{if} \>\> 1\le s \le 1.6.
\end{array}
\right.
$$
Up to a rotation and a translation, there exists a unique curve $\gamma (s)=(x(s),y(s))$ (parametrized with respect to its arc length) having curvature $k(s)$.  If $F(u)=\dint_0^u k(t)dt$, $u \in [0,1.6]$, then $\gamma$ has the following parametrization:
$$
\gamma(s)=\left(\int_0^s \cos F(u)\,du, \int_0^s \sin F(u)\,du\right),\quad 0\leq s \leq 1.6.
$$
 \begin{figure}[h]\label{fig}
\centering
 \includegraphics[scale=0.35]{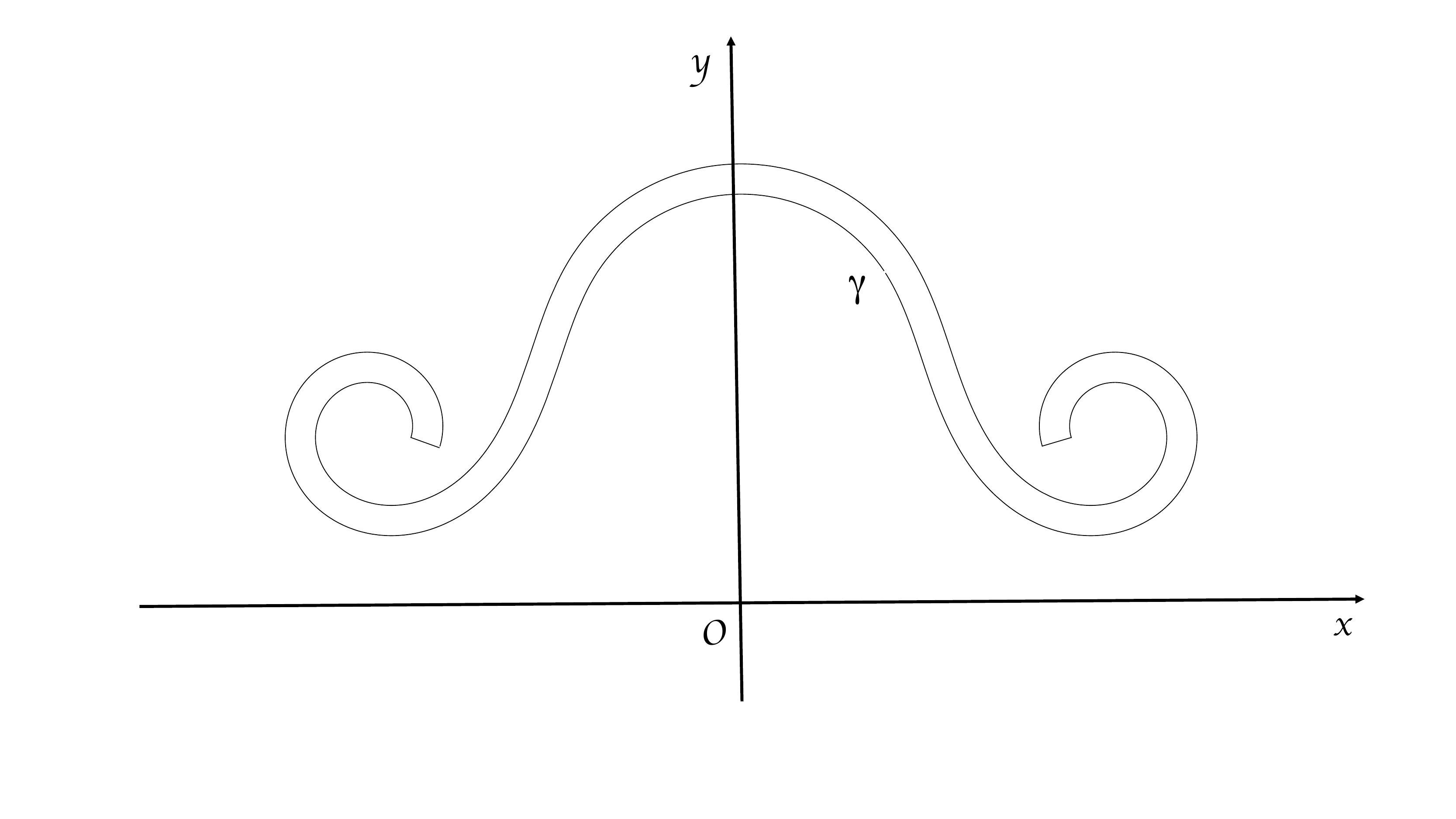}
\caption{The ``handlebar moustache'' domain of Example 3 with $\delta=.03$.}
\end{figure}
By rotating and translating so that $\gamma$ is symmetric with respect to the $y$-axis, we may build $D$ as in Theorem \ref{1} (see Figure $2$).

We next find positive values of $\delta$ so that \eqref{delta2} is satisfied. First observe that the requirement $1+\delta k(s)>0$ on $[0,1.6]$ forces $\delta<0.04$.  Next, note that
\[
\max_{s\in [0,1.6]} \delta^2(2+\delta k(s))^2=\delta^2(2+5\delta)^2\quad \textup{and}\quad \max_{s\in[0,1.6]}\frac{4\delta^2}{(1+\delta k(s))^2}=\frac{4\delta^2}{(1-25\delta)^2}.
\]
Since $1-25\delta>0$, the inequality
\[
\frac{4\delta^2}{(1-25\delta)^2}>\delta^2(2+5\delta)^2
\]
is equivalent to $2>(1-25\delta)(2+5\delta)=2-45\delta-125\delta^2$, which holds since $\delta>0$.
It follows that inequality \eqref{delta2} in Proposition \ref{2d-1} holds precisely when
\[
Q(\delta)=\frac{1.6^2}{\pi^2}\frac{\dint_0^{1.6}\cos^2\left(\frac{\pi}{1.6}s\right)\left(\delta+\frac{\delta^2}{2}k(s)\right)ds}{\dint_0^{1.6}\sin^2\left(\frac{\pi}{1.6}s\right)\frac{\log(1+\delta k(s))}{k(s)}ds}-\frac{4\delta^2}{(1-25\delta)^2}>0.
\]
We graph $Q(\delta)$ in Figure $3$ below
 \begin{figure}[h]\label{dg}
\centering
 \includegraphics[scale=.8]{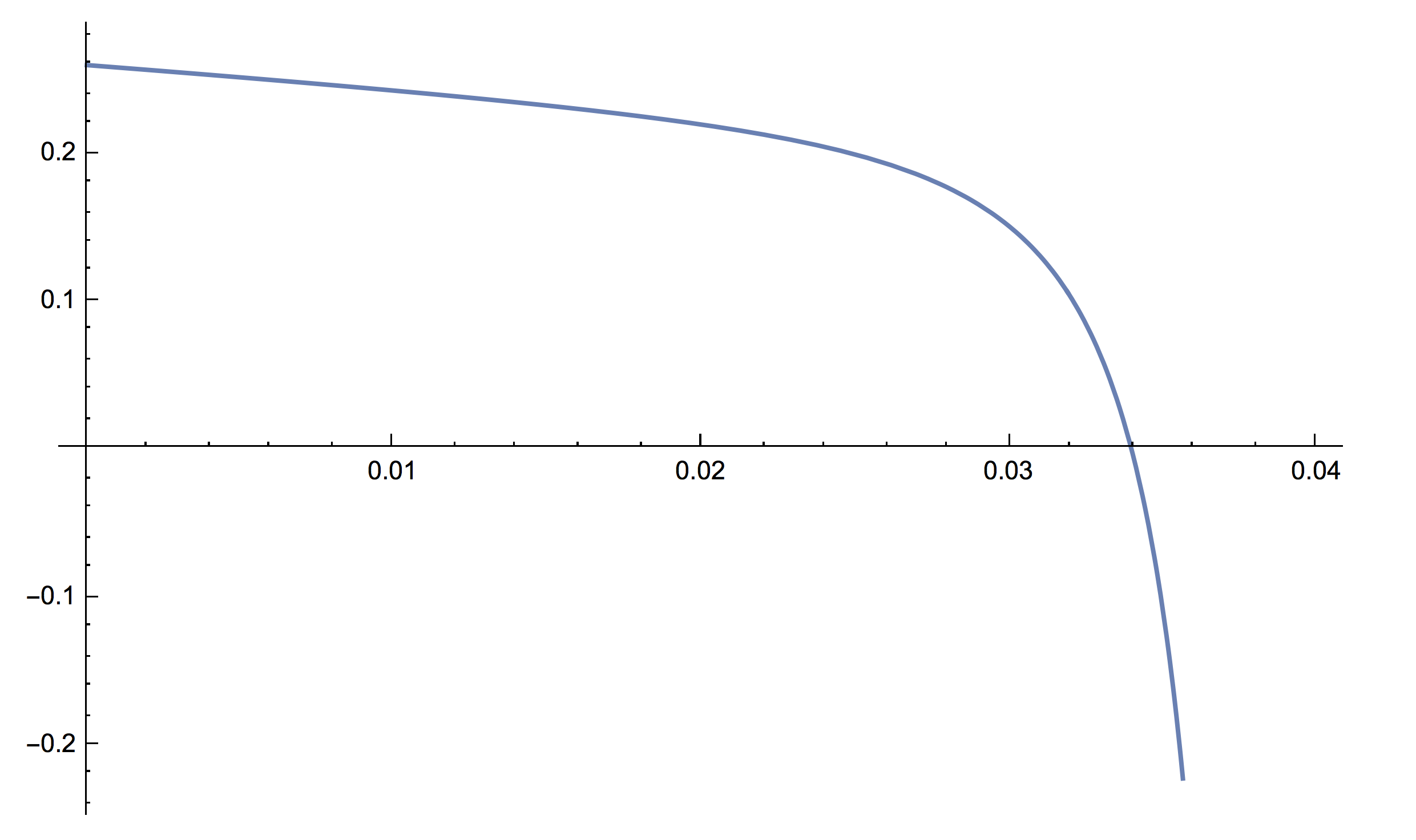}
\caption{A graph of $Q(\delta)$ from Example 3.}
\end{figure}
and find using Mathematica that $Q(\delta)>0$  provided $\delta<.03393$. We therefore have
$$
\mu_1(D)=\mu_1^{odd}(D) \ge \frac{\pi^2}{2.56(1+5\delta)^2}
$$
for such values of $\delta$.

\end{example}

%--------------------------------------------------------------------------------------------------------------------------------------------------------------------------------------------------------------------------------------------------------------------------------------------------------------------------------------------------------------------------------------------------------------

\section{Some considerations in the three-dimensional case}

Let $\Omega$ be a bounded subset of $\R^2$ and let $\varphi(s,t)=\left(x(s,t),y(s,t),z(s,t)\right)$, $(s,t)\in \Omega$, be a smooth surface. 
Consider the three-dimensional domain $D$ consisting of the points on one side of $\varphi$, within a suitable distance $\delta$ of $\varphi$. If we denote by $\nu=(\nu_x,\nu_y,\nu_z)=\dfrac{\varphi_s \times \varphi_t}{||\varphi_s \times \varphi_t||}$ a chosen unit normal vector to $\varphi$, $D$ can be described as follows:
\begin{equation} \label{3dD}
D=\left\{\left(x(s,t)+r\nu_x(s,t),y(s,t)+r\nu_y(s,t),z(s,t)+r\nu_z(s,t)\right): \> (s,t)\in \Omega, \> r \in (0,\delta) \right\}.
\end{equation}
In order to evaluate integrals over $D$, we introduce a Fermi coordinate system using $r=\dist_\varphi(x,y,z)$ as one coordinate and $s,t$, the coordinates in $\Omega$ of the point on $\varphi$ nearest to $(x,y,z)$, as the other ones. 
The domains $D$ that we will consider arise from smooth surfaces $\varphi$ that are generalized cylinders and surfaces of revolution.

\subsection{Generalized cylinders}\label{sub:gencyl}
A generalized cylinder is a special case of a ruled surface, which is a surface that is a union of straight lines.  We have a generalized cylinder when the straight lines, or \emph{rulings}, are all parallel to each other.  Given a set of parallel rulings, a parametrized curve $\alpha$ in $\mathbb{R}^3$ that meets each of these rulings, and a constant unit vector $\beta$ that is parallel to the rulings, the corresponding generalized cylinder may be described as 
\begin{equation} \label{gencyl}
\varphi(s,t) = \alpha(s) +t \beta, \qquad  t \in \R.
\end{equation} 
It can be shown that we may always assume that $\alpha$ is parametrized with respect to arc length and contained in a plane that is perpendicular to $\beta$.  Without loss of generality, suppose that $\alpha(s)=(x(s),0,z(s))$, $s\in [0,L]$, is a smooth, non-closed, simple curve, parametrized with respect to its arc length, whose image is contained in the plane $y=0$. Moreover, suppose $\alpha(s)$ is symmetric with respect to the $z$-axis so that
$$
x(L-s)=-x(s), \ \  z(L-s)=z(s), \quad s \in \left[0,\frac{L}{2}\right].
$$
Take $\beta = (0,-1,0)$ and consider the surface $\varphi$ given by \eqref{gencyl} with $s \in (0,L)$ and $t \in (0,T)$ for some $T>0$.  A typical domain $D$ constructed from a generalized cylinder as in \eqref{3dD} is shown in Figure $4$.

 \begin{figure}[h]\label{fig}
\centering
 \includegraphics[scale=0.4]{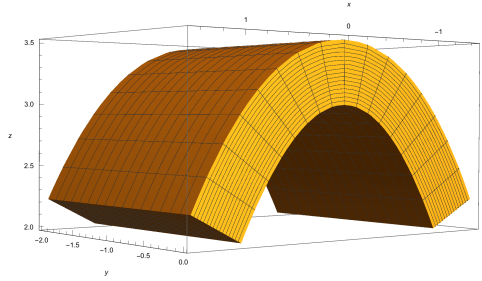}
\caption{A typical domain $D$ constructed from a generalized cylinder.}
\end{figure}

We see that $\{ \varphi_s, \varphi_t, \nu\}$ forms an orthonormal basis for $\mathbb{R}^3$.  Recalling notation from the appendix, we compute
\begin{eqnarray*}
E=|\varphi_s|^2 = 1, &\quad& F=\varphi_s\cdot \varphi_t=0, \quad  \quad G=|\varphi_t|^2 = 1, \\
a = \frac{FM-LG}{EG-F^2} = -L = - \varphi_{ss}\cdot \nu= k(s), &\quad& b=\frac{LF-EM}{EG-F^2} = -M = -\varphi_{st} \cdot \nu = 0, \\
c = \frac{FN-MG}{EG-F^2} = 0, &\quad& d=\frac{MF-NE}{EG-F^2} = -N = -\varphi_{t t}\cdot \nu=0, 
\end{eqnarray*}
where $k(s)$ is the curvature of $\alpha$.  Moreover, the Jacobian of the Fermi transformation is independent of $t$:
$$
\det(J(r,s,t)) =   \det(J(r,s))= 1+ar = 1+rk(s).
$$

With this setup, we can now give a lower bound on $\mu_1^{odd}$ for generalized cylinders.
\begin{theorem}\label{gc}
Suppose that the curvature $k(s)$ of $\alpha$ is concave in $[0,L]$ and let $\delta >0$ be such that  $1+\delta k(s) > 0$ in $[0,L]$. If $D$ is simply connected and $\mu_1^{odd}(D)$ is the smallest nontrivial Neumann eigenvalue having a corresponding eigenfunction that is odd with respect to the plane $x=0$, we have
$$
\mu_1^{odd}(D) \ge B \frac{\pi^2}{L^2},
$$
where $B=\underset{[0,\delta]\times[0,L]}{\min}\frac{1}{(1+rk(s))^2}$.
\end{theorem}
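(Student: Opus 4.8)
The plan is to adapt the proof of Theorem \ref{1} essentially verbatim, treating the ruling coordinate $t$ as a spectator variable. The first step is to record the Dirichlet energy and the volume element in the Fermi coordinates $(r,s,t)$, which parametrize $D$ globally because $D$ is simply connected and $1+\delta k>0$. Since $\{\varphi_s,\varphi_t,\nu\}$ is orthonormal and the second fundamental form has only the entry $a=k(s)$ nonzero (so $b=c=d=0$), the induced metric is diagonal, with $g_{rr}=g_{tt}=1$ and $g_{ss}=(1+rk(s))^2$; the volume element is $(1+rk(s))\,dr\,ds\,dt$, matching the Jacobian computed above. Consequently, for an odd eigenfunction $u$ corresponding to $\mu_1^{odd}(D)$ one has
\[
\int_D|\nabla u|^2\,dV=\int_0^T\!\!\int_0^L\!\!\int_0^\delta\left((1+rk)u_r^2+\frac{u_s^2}{1+rk}+(1+rk)u_t^2\right)dr\,ds\,dt.
\]

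Next I would slice $D$ by the distance coordinate $r$ into the sets $D_i=\{(x,y,z)\in D:\ i\delta/n<r<(i+1)\delta/n\}$, exactly as before. Discarding the nonnegative terms $(1+rk)u_r^2$ and $(1+rk)u_t^2$ and invoking the definition of $B$ gives
\[
\int_{D_i}|\nabla u|^2\,dV\ge B\int_0^T\!\!\int_0^L\!\!\int_{i\delta/n}^{(i+1)\delta/n}u_s^2(r,s,t)(1+rk)\,dr\,ds\,dt.
\]
From here the decomposition into $I_1+I_2$ (and the analogous $J_1+J_2$ for $\int_{D_i}u^2\,dV$) carries over without change, with $t$ simply riding along under an extra integral; the Mean Value Theorem again bounds $|I_1|$ and $|J_1|$ by $(2A^2\delta/n)|D_i|$, where $A$ bounds $u$ and its first two derivatives in Fermi coordinates and $|D_i|$ is the three-dimensional volume of the slice.

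The crux is the step relating $I_2$ and $J_2$ through Lemma \ref{2}. Here I would use that $k$ is even about $s=L/2$, so the weight $p(s)=1+\tfrac{1+2i}{2}\tfrac{\delta}{n}k(s)$ is even, positive (since $\tfrac{1+2i}{2}\tfrac{\delta}{n}\in(0,\delta)$ and $1+\delta k>0$), and concave (by concavity of $k$). Since $u$ is odd with respect to the plane $x=0$, for each fixed $t$ the slice trace $s\mapsto u(i\delta/n,s,t)$ is odd about $s=L/2$, whence $\int_0^L u(i\delta/n,s,t)\,p(s)\,ds=0$. Applying Lemma \ref{2} for each fixed $t$ and integrating in $t$ yields $I_2\ge(\pi^2/L^2)J_2$. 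Combining the estimates precisely as in Theorem \ref{1}, summing over $i$, and letting $n\to\infty$ produces $\int_D|\nabla u|^2\,dV\ge B(\pi^2/L^2)\int_D u^2\,dV$, which is the claim.

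I do not expect a genuine obstacle, because the geometry of the generalized cylinder forces the $t$-direction to decouple completely: the metric is $t$-independent and block-diagonal, so the extra integration never interacts with the slicing in $s$. The only point requiring care is verifying the diagonal form of the Fermi metric and the resulting energy identity, which rests on the appendix computations and the vanishing of $b,c,d$ recorded above; everything else is the two-dimensional argument of Theorem \ref{1} with a passive integral in $t$ appended throughout.
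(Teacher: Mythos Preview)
Your proof is correct and follows essentially the same strategy as the paper: Fermi coordinates, slicing in $r$, and applying Lemma~\ref{2} in the $s$-variable. The only difference is that the paper also discretizes the ruling variable $t$ (partitioning $D$ into boxes $D_{ij}$ and freezing $u$ at $(i\delta/n,s,jT/n)$ before invoking Lemma~\ref{2}), whereas you leave $t$ as a continuous parameter and apply Lemma~\ref{2} pointwise in $t$; since the Jacobian and the weight $p(s)$ are $t$-independent, your version is a legitimate (and slightly leaner) simplification.
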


\begin{proof}
We will argue as in the two-dimensional case. Fix $n \in \mathbb{N}$. For any $i=0,\dots,n-1$,  and any $j=0,\dots, n-1$, let us denote by
$$
D_{ij}=\left\{(x,y,z)\in D:\> \frac{i\delta}{n}<\dist_\varphi(x,y,z)<\frac{(i+1)\delta}{n},\> \frac{jT}{n}<t<\frac{(j+1)T}{n}\right\}.
$$

\noindent Let $u$ be an eigenfunction corresponding to $\mu_1^{odd}(D)$ that is odd with respect to the plane $x=0$. Using the definition of eigenfunction and a Green's formula, we see that
$$
\mu_1^{odd}(D)=\frac{\dint_D |\nabla u|^2 dxdydz}{\dint_D u^2 dxdydz};
$$
moreover, the fact that $u$ is odd implies
$$ 
\int_{D_{ij}} u\,dxdydz=0 \quad  \text{for all } i=\ 0,\dots,n-1  \text{ and for all }j=0,\dots,n-1.
$$

We want to evaluate the energy of $u$ in any $D_{ij}$. Using the Fermi coordinate system
and denoting $\mathcal{J} =| \det(J) |$, we have
\begin{eqnarray*}\label{Di}
\int_{D_{ij}} |\nabla u|^2 dxdydz &=& \int_0^L\left(\int_{\frac{jT}{n}}^{\frac{(j+1)T}{n}}\int_{\frac{i\delta}{n}}^{\frac{(i+1)\delta}{n}}\left(u_r^2+u_s^2 \frac{(1+dr)^2}{\J^2}+u_t^2\frac{(1+ar)^2}{\J^2}\right)\, \J\, dr dt\right)ds
\\
&\ge & \min_{D_{ij}}\J^{-2}\int_0^L\left(\int_{\frac{jT}{n}}^{\frac{(j+1)T}{n}}\int_{\frac{i\delta}{n}}^{\frac{(i+1)\delta}{n}}u_s^2\,  \J\, dr dt\right)ds \notag
\\
&\ge&B \int_0^L\left(\int_{\frac{jT}{n}}^{\frac{(j+1)T}{n}}\int_{\frac{i\delta}{n}}^{\frac{(i+1)\delta}{n}}u_s^2\,  \J\, dr dt \right)ds. \notag
\end{eqnarray*}
Let us write
$$
\int_0^L\left(\int_{\frac{jT}{n}}^{\frac{(j+1)T}{n}}\int_{\frac{i\delta}{n}}^{\frac{(i+1)\delta}{n}}u_s^2 \,  \J\, dr dt\right)ds=I_1+I_2,
$$
where
\begin{eqnarray*}
I_1&=&\int_0^L\left(\int_{\frac{jT}{n}}^{\frac{(j+1)T}{n}}\int_{\frac{i\delta}{n}}^{\frac{(i+1)\delta}{n}}\left(u_s^2( r,s,t)-u_s^2\left( \frac{i\delta}{n},s,\frac{jT}{n}\right)\right) \J(r,s,t)dr dt\right)ds,\\
I_2&=&\int_0^L\left(\int_{\frac{jT}{n}}^{\frac{(j+1)T}{n}}\int_{\frac{i\delta}{n}}^{\frac{(i+1)\delta}{n}}u_s^2\left(\frac{i\delta}{n},s,\frac{jT}{n}\right) \J(r,s,t)dr dt\right)ds \\
&=&\frac{T\delta}{n^2}\int_0^L u_s^2\left( \frac{i\delta}{n},s,\frac{jT}{n}\right)\left(1+\frac{1+2i}{2} \frac{\delta}{n}k(s)\right)ds.
\end{eqnarray*}
Analogously, it holds that
$$
\int_{D_{ij}} u^2dxdydz=\int_0^L\left(\int_{\frac{jT}{n}}^{\frac{(j+1)T}{n}}\int_{\frac{i\delta}{n}}^{\frac{(i+1)\delta}{n}}u^2\, \J dr dt\right)ds=H_1+H_2,
$$
where
\begin{eqnarray*}
H_1&=&\int_0^L\left(\int_{\frac{jT}{n}}^{\frac{(j+1)T}{n}}\int_{\frac{i\delta}{n}}^{\frac{(i+1)\delta}{n}}\left(u^2( r,s,t)-u^2\left( \frac{i\delta}{n},s,\frac{jT}{n}\right)\right) \J(r,s,t) dr dt\right)ds,
\\
H_2&=&\int_0^L\left(\int_{\frac{jT}{n}}^{\frac{(j+1)T}{n}}\int_{\frac{i\delta}{n}}^{\frac{(i+1)\delta}{n}}u^2\left( \frac{i\delta}{n},s,\frac{jT}{n}\right)\J(r,s,t) dr dt\right)ds
\\
&=&\frac{T\delta}{n^2}\int_0^L u^2\left( \frac{i\delta}{n},s,\frac{jT}{n}\right)\left(1+\frac{1+2i}{2} \frac{\delta}{n} k(s)\right)ds.
\end{eqnarray*}
Let $A$ be a common bound for the absolute value of each of $u$ and its first and second derivatives when expressed in Fermi coordinates.  Applying the Mean Value Theorem, we deduce that
$$
|I_1| \le \frac{2A^2(T+\delta)}{n}|D_{ij}|, \quad 
|H_1|\le \frac{2A^2(T+\delta)}{n}|D_{ij}|.
$$
Using that $k(s)$ is even and $u$ is odd with respect to $\frac{L}{2}$, we see that
$$
\int_0^Lu\left(\frac{i\,\delta}{n}, s,\frac{jT}{n}\right)\left(1+\frac{1+2i}{2} \frac \delta n k(s)\right)ds=0.
$$
 Thus, arguing as in the two-dimensional case, we may apply Lemma \ref{2} to conclude that $I_2 \geq \dfrac{\pi^2}{L^2}H_2$.  We combine our estimates in a manner parallel to that of the two-dimensional case, summing over $i$ and $j$; taking the limit as $n$ goes to $+\infty$ yields the result.
\end{proof}

\begin{remark}
Let $D$ be constructed from a generalized cylinder as in Theorem $\ref{gc}$. By separation of variables, the eigenvalues of $D$ take the form
\[
\mu(D)=\mu_m(\widetilde{D})+\frac{n^2\pi^2}{T^2},\quad m,n\geq0,
\]
where $\widetilde{D}$ is a two-dimensional domain in the $xz$-plane as in \eqref{D}. If $T$ is sufficiently large, then $\mu_1(D)=\frac{\pi^2}{T^2}$ with corresponding eigenfunction $u(x,y,z)=\cos\left(\frac{\pi y}{T}\right)$. Observe that $u$ is odd with respect to the plane $y=-\frac{T}{2}$. Thus as $T$ becomes large, we expect eigenfunctions for $\mu_1(D)$ to exhibit odd symmetry with respect to the plane $y=-\frac T 2 $ rather than the plane $x=0$. On the other hand, if $T$ is sufficiently small, then $\mu_1(D)=\mu_1(\widetilde{D})$ and eigenfunctions for $\mu_1(D)$ take the form $u(x,y,z)=v(x,z)$, where $v$ is an eigenfunction for $\mu_1(\widetilde{D})$. Hence we may apply Proposition \ref{2d-1} to give conditions on $\delta$ that guarantee $\mu_1(\widetilde{D})=\mu_1^{odd}(\widetilde{D})$ and therefore $\mu_1(D)=\mu_1^{odd}(D)$.
\begin{comment}
Of course, if $T$ is too large compared to $L$, we cannot expect that $\mu_1(D)$ coincides with $\mu_1^{odd}(D)$.
\end{comment}
\end{remark}

\subsection{Surfaces of revolution}

Let $\alpha(s) = (x(s), 0, z(s)), s \in [0,L]$, be the curve considered in \S \ref{sub:gencyl}. We assume, without loss of generality, that $x(s)\geq 0$ for $s\in \left[0,\frac{L}{2}\right]$. Our aim is to construct a three-dimensional domain $D$ consisting of certain points on one side of the surface of revolution obtained by a  $\pi$ rotation of $\alpha$ around the $z$-axis. 
We consider the surface
\begin{equation}\label{phi+}
\varphi(s,\theta)=\left(x(s,\theta), y(s,\theta), z(s,\theta)\right)=\left(x(s)\sin \theta, x(s)\cos\theta,z(s)\right),\qquad s\in {\left(0,\frac L 2 \right), \> \theta \in (0, \pi)},
\end{equation}
and the domain
$$
D_+=\left\{\left(x(s,\theta)+r\nu_x(s,\theta),y(s,\theta)+r\nu_y(s,\theta),z(s,\theta)+r\nu_z(s,\theta)\right): \> (s,\theta)\in \left(0,\frac L 2\right)\times (0, \pi), \> r \in (0,\delta)\right\}.
$$
Then, $D=D_+\cup D_-\cup (\textup{int}(\partial D_+ \cap \{x=0\}))$, where $D_-$ is the domain symmetric to $D_+$ with respect to the $yz$-plane, and $\textup{int}$ denotes the two-dimensional interior taken in the plane $\{x=0\}$.
 Note that the parametrization of $\varphi$ is not regular at $s=\frac{L}{2}$ or at $s=0$; we identify all points corresponding to each such $s$-value and define $\nu$ at those points by continuous extension. Had we done a rotation through $\pi$ of the whole curve $\alpha$, we would have more serious issues with regularity. A typical domain $D$ constructed from a surface of revolution is shown in Figure $5$.

 \begin{figure}[h]\label{fig}
\centering
 \includegraphics[scale=0.6]{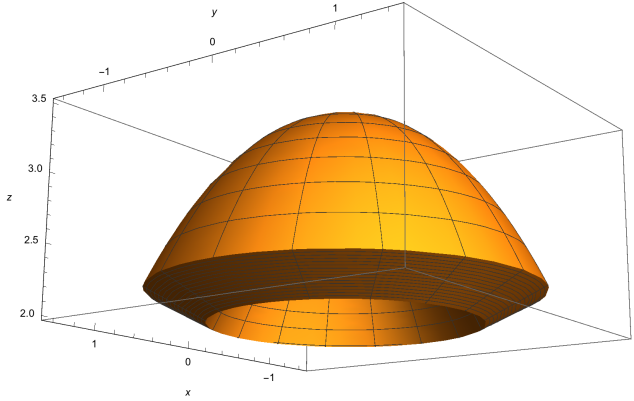}
\caption{A typical domain $D$ constructed from a surface of revolution.}
\end{figure}

We see that $\{ \varphi_s, \varphi_\theta, \nu\}$ forms an orthonormal basis for $\mathbb{R}^3$.  Recalling notation from the appendix, we compute
\begin{eqnarray*}
E=|\varphi_s|^2 = 1, \quad F&=&\varphi_s\cdot \varphi_\theta=0, \quad G=|\varphi_\theta|^2 = x(s)^2, \\
a = \frac{FM-LG}{EG-F^2} = -L = - \varphi_{ss}\cdot \nu= k(s), &\quad& b=\frac{LF-EM}{EG-F^2} = -\dfrac{M}{G} = -\dfrac{\varphi_{s\theta} \cdot \nu}{G} = 0, \\
c = \frac{FN-MG}{EG-F^2} = -M=-\varphi_{s\theta}\cdot\nu=0, &\quad& d=\frac{MF-NE}{EG-F^2} = -\dfrac{N}{G} = -\dfrac{\varphi_{\theta \theta}\cdot \nu}{G}=\dfrac{z'(s)}{x(s)}, 
\end{eqnarray*}
where $k(s)$ is the curvature of $\alpha$.  Moreover, the Jacobian of the Fermi transformation is independent of $\theta$:
$$
\det (J(r,s,\theta))=\det(J(r,s))=(x(s)+rz'(s))(1+rk(s)).
$$

With this setup, we may now give a lower bound on $\mu_1^{odd}$ for surfaces of revolution.

\begin{theorem}\label{33}
Let $\delta>0$ be such that $\det(J(r,s))>0$ on $[0,\delta] \times \left(0,\dfrac{L}{2}\right)$ and $1+\delta k(s)>0$ for $s\in \left[0,L \right]$. 
Suppose that for each $r\in[0,\delta]$, the function $\mathcal{J}(r,s)=|\det(J(r,s))|$ is concave in $s\in \left[0,\dfrac L 2 \right]$. Assume $D$ is simply connected and denote by $\mu_1^{odd}(D)$ the smallest nontrivial Neumann eigenvalue with a corresponding eigenfunction that is odd with respect to the plane $x=0$. Then
\begin{equation}\label{M3}
\mu_1^{odd}(D)\ge B \frac{\pi^2}{L^2},
\end{equation}
with $B= \displaystyle\min_{[0,\delta]\times [0,L]}\dfrac{1}{(1+rk(s))^2}.$
\end{theorem}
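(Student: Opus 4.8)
The plan is to follow the blueprint of Theorem~\ref{gc}, adapting the slicing argument to the surface-of-revolution setting where the Jacobian $\mathcal{J}(r,s)=|\det(J(r,s))|=(x(s)+rz'(s))(1+rk(s))$ now depends nontrivially on $s$ through $x(s)$ and $z'(s)$, not merely through the curvature factor. Fix $n\in\N$ and partition $D$ into slices $D_{ij}$ by cutting along the level sets $r=\frac{i\delta}{n}$ of the distance to $\varphi$ and along the angular direction $\theta=\frac{j\pi}{n}$. Since $u$ is odd with respect to the plane $x=0$, which in these coordinates corresponds to odd symmetry of $\alpha$ about $s=\frac{L}{2}$, the mean value of $u$ over each slice $D_{ij}$ vanishes. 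Writing the Dirichlet and mass integrals in Fermi coordinates and using the expressions for $a,b,c,d$ computed above, the energy in $D_{ij}$ dominates $B$ times $\int u_s^2\,\mathcal{J}\,drd\theta\,ds$, exactly as in the cylinder case, because the off-diagonal coefficients $b,c$ vanish and the only change is that $d=z'(s)/x(s)$ is now nonzero; one checks that the $u_s^2$ coefficient $(1+dr)^2/\mathcal{J}^2$ still combines with $\mathcal{J}$ to leave $\mathcal{J}$ after the lower bound $(1+dr)^2\ge 1$ is discarded in favor of the minimum of $\mathcal{J}^{-2}$.

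Next I would split each integral $\int u_s^2\,\mathcal{J}$ and $\int u^2\,\mathcal{J}$ into a principal term $I_2$ (respectively $H_2$), obtained by freezing $u$ and $u_s$ at the corner values $(r,\theta)=(\frac{i\delta}{n},\frac{j\pi}{n})$, and a remainder $I_1$ (respectively $H_1$) controlled via the Mean Value Theorem by $\frac{2A^2(\pi+\delta)}{n}|D_{ij}|$, with $A$ a uniform bound on $u$ and its first two derivatives in Fermi coordinates. The crucial step is relating $I_2$ and $H_2$ through Lemma~\ref{2}. Here the frozen integrals take the form $\frac{\delta\pi}{n^2}\int_0^L u_s^2(\tfrac{i\delta}{n},s,\tfrac{j\pi}{n})\,\mathcal{J}(\tfrac{i\delta}{n},s)\,ds$ and the analogous mass integral; since $k(s)$ is even and $x(s),z(s)$ satisfy the stated symmetry, $\mathcal{J}(\tfrac{i\delta}{n},s)$ is even about $s=\tfrac{L}{2}$, and the oddness of $u$ forces $\int_0^L u(\tfrac{i\delta}{n},s,\tfrac{j\pi}{n})\,\mathcal{J}(\tfrac{i\delta}{n},s)\,ds=0$. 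The hypothesis that $\mathcal{J}(r,s)$ is concave in $s$ on $[0,\tfrac{L}{2}]$ (together with its evenness, giving concavity on all of $[0,L]$) and nonnegative is precisely what lets me invoke Lemma~\ref{2} with weight $p(s)=\mathcal{J}(\tfrac{i\delta}{n},s)$ to conclude $I_2\ge\frac{\pi^2}{L^2}H_2$.

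Having these pieces, I would combine them as in the proof of Theorem~\ref{1}: chaining the energy lower bound through $I_2\ge\frac{\pi^2}{L^2}H_2$ and absorbing the $I_1,H_1$ errors yields
\[
\int_{D_{ij}}|\nabla u|^2\,dxdydz\ge B\frac{\pi^2}{L^2}\int_{D_{ij}}u^2\,dxdydz-\frac{C(\pi+\delta)}{n}|D_{ij}|
\]
for a constant $C>0$ independent of $i,j,n$. Summing over both indices $i,j=0,\dots,n-1$ gives $\int_D|\nabla u|^2\ge B\frac{\pi^2}{L^2}\int_D u^2-\frac{C(\pi+\delta)}{n}|D|$, and letting $n\to\infty$ produces the claimed bound \eqref{M3}.

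I expect the main obstacle to be the concavity hypothesis on $\mathcal{J}(r,s)=(x(s)+rz'(s))(1+rk(s))$, which is assumed outright in the statement rather than derived; the delicate point in the proof is verifying that this concavity, which holds a priori only on the half-interval $[0,\tfrac{L}{2}]$ where the surface is genuinely parametrized, extends by the even symmetry to a function on $[0,L]$ that remains concave and nonnegative, so that Lemma~\ref{2} applies with $L$ as the full interval length. A secondary subtlety is the behavior at the axis points $s=0$ and $s=\tfrac{L}{2}$ where $x(s)=0$ and the parametrization degenerates: one must confirm that the identifications and the continuous extension of $\nu$ do not spoil the integrations by parts underlying the Green's formula, and that the odd eigenfunction extends smoothly enough across these seams for the uniform bound $A$ and the Mean Value Theorem estimates to remain valid.
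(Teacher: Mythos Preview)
Your proposal has a genuine gap that you partly anticipate but underestimate. The symmetry of $\alpha$ about $s=\tfrac{L}{2}$ forces $x(\tfrac{L}{2})=0$ and $z'(\tfrac{L}{2})=0$, so
\[
\mathcal{J}\!\left(r,\tfrac{L}{2}\right)=\Bigl(x\bigl(\tfrac{L}{2}\bigr)+rz'\bigl(\tfrac{L}{2}\bigr)\Bigr)\bigl(1+rk\bigl(\tfrac{L}{2}\bigr)\bigr)=0\quad\text{for every }r.
\]
A nonnegative concave function on $[0,L]$ that vanishes at the interior point $\tfrac{L}{2}$ is identically zero; hence the even extension of $\mathcal{J}(r,\cdot)$ to $[0,L]$ is \emph{never} concave (except trivially), and Lemma~\ref{2} cannot be invoked on the full interval. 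There is a second, related issue: in the surface-of-revolution geometry the reflection in $\{x=0\}$ acts by $\theta\mapsto-\theta$, not by $s\mapsto L-s$, so oddness of $u$ does not translate into oddness of $s\mapsto u(\tfrac{i\delta}{n},s,\tfrac{j\pi}{n})$ about $\tfrac{L}{2}$, and the zero-mean condition $\int_0^L u\,\mathcal{J}\,ds=0$ that Lemma~\ref{2} requires is not available either.

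The paper avoids both problems by never leaving the half-interval. It observes that $u|_{D_+}$ solves a mixed problem with Dirichlet data on $\partial D_+\cap\{x=0\}$, partitions only $D_+$ (so $s\in[0,\tfrac{L}{2}]$), and replaces Lemma~\ref{2} by a new variant, Lemma~\ref{pw1}, tailored to weights $p$ that are concave and nonnegative on $[0,\tfrac{L}{2}]$ with $p(\tfrac{L}{2})=0$ and to test functions $v$ with $v'(0)=0$, $v(\tfrac{L}{2})=0$. The condition $v(\tfrac{L}{2})=0$ comes from the Dirichlet condition on the axis of revolution, and the proof of Lemma~\ref{pw1} uses the Payne--Weinberger substitution $w=v'p^{1/2}$, which now satisfies homogeneous Dirichlet conditions at both endpoints of $[0,\tfrac{L}{2}]$. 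This half-interval lemma is the missing ingredient; the remainder of your slicing and error-control argument is otherwise in line with the paper's.
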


In order to prove this theorem we need a variant of Lemma \ref{2}.
\begin{lemma}\label{pw1}
Let $p(s)$ be a concave,  non-negative function on the interval $\left[0,\frac L 2\right]$ such that $p\left(\dfrac L 2 \right)=0$. Then for any piecewise twice differentiable function $v(s)$ that satisfies
$$
v'(0)=v\left(\dfrac L 2 \right)=0,
$$
it follows that
$$
\int_0^{{\frac L 2}} (v'(s))^2 p(s)ds \ge \frac{4\pi^2}{L^2}\int_0^{\frac L 2} (v(s))^2 p(s)ds.
$$
\end{lemma}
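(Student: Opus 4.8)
The plan is to prove Lemma \ref{pw1} by a weighted ground-state (Picone) substitution, which is the natural tool for a one-dimensional Rayleigh quotient with a degenerate weight and which makes transparent the role of the hypothesis $p(L/2)=0$. Write $a=\frac L2$ and set $\lambda=\frac{4\pi^2}{L^2}=\frac{\pi^2}{a^2}$; the goal is the non-negativity of $\int_0^a (v')^2 p\,ds-\lambda\int_0^a v^2 p\,ds$ for every admissible $v$. The two boundary conditions are complementary: the Neumann condition $v'(0)=0$ will be matched by a comparison function with vanishing derivative at $0$, while the Dirichlet condition $v(a)=0$ together with $p(a)=0$ is exactly what makes the boundary contributions at the degenerate endpoint disappear.

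First I would seek a positive comparison function $\phi$ on $[0,a)$ that solves, or super-solves, the Euler--Lagrange equation $(p\phi')'+\lambda p\phi\le 0$, normalized by $\phi'(0)=0$. Writing $v=\phi w$ and integrating by parts, one obtains the Picone-type identity
\[
\int_0^a (v')^2 p\,ds=\Big[p\phi\phi' w^2\Big]_0^a-\int_0^a \phi\,(p\phi')'\,w^2\,ds+\int_0^a p\phi^2 (w')^2\,ds.
\]
The last integral is non-negative, the middle term dominates $\lambda\int_0^a p v^2\,ds$ as soon as $\phi$ is a supersolution, and the boundary term at $s=a$ \emph{vanishes because $p(a)=0$}: this is precisely where the hypothesis $p(L/2)=0$ enters, since it frees us from imposing $\phi(a)=0$ and thereby sidesteps the singular behavior of solutions at the degenerate endpoint. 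At $s=0$ the term vanishes because $\phi'(0)=0$, and the admissibility $v(a)=0$ guarantees $w^2 p\phi\phi'\to 0$ at $a$. Combining these yields the claimed inequality.

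The main obstacle is the construction of such a positive supersolution $\phi$, and this is the step that must use the concavity of $p$ in an essential way, since for a general weight the first mixed Neumann--Dirichlet eigenvalue can be pushed down. Concavity, together with $p\ge 0$ and $p(a)=0$, is what should guarantee that the solution of the comparison equation with $\phi'(0)=0$ stays strictly positive throughout $[0,a)$. I would attempt to build $\phi$ from the model concave weight and a trigonometric profile and then transfer positivity to arbitrary concave $p$ by a Sturm comparison argument; alternatively, one can argue directly on the minimizer, showing it is monotone and comparing it against the model as in \cite{PW} (and using the even reflection at $s=0$ licensed by $v'(0)=0$, in the spirit of Remark \ref{obs}). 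Verifying this positivity, and hence pinning down the sharp constant, is the crux of the proof.
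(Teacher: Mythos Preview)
Your Picone framework is set up correctly, and your identification of the role of $p(L/2)=0$ in killing the boundary term at $s=L/2$ is right. But the proposal has a genuine gap: the entire content of the lemma is hidden in the existence of the positive supersolution $\phi$, and you have not constructed it. By Sturm theory, a solution of $(p\phi')'+\lambda p\phi=0$ with $\phi'(0)=0$ stays positive on $[0,L/2)$ \emph{if and only if} $\lambda$ does not exceed the first eigenvalue of the mixed problem, so invoking such a $\phi$ without building it is circular. Your two suggested routes do not close the gap: an explicit trigonometric profile such as $\cos\!\big(\tfrac{2\pi s}{L}\big)$ changes sign at $s=L/4$, so it cannot serve as the ground state; and a Sturm comparison against a ``model'' concave weight requires putting $(p\phi')'+\lambda p\phi=0$ into Liouville normal form and controlling the resulting potential $-\tfrac{p''}{2p}+\tfrac{(p')^2}{4p^2}$, which is exactly the computation you were hoping to avoid. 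Your fallback, ``argue directly on the minimizer \dots\ as in \cite{PW},'' simply defers to the paper's own method.

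The paper's proof takes a different route that sidesteps the supersolution entirely. Following \cite{PW} and \cite{Beb}, one works with the minimizer $v$ and sets $w=v'\,p^{1/2}$. The imposed conditions $v'(0)=0$ and $p(L/2)=0$ force $w(0)=w(L/2)=0$, so $w$ is admissible in the Rayleigh quotient for the fixed-end string on $[0,L/2]$, whose bottom eigenvalue is $4\pi^2/L^2$. Concavity of $p$ then enters through the sign of $p''$ in the transformed equation, yielding the comparison with the unweighted string; the condition $v(L/2)=0$ is used to discard a boundary term in that step. This substitution turns the weighted problem directly into a Dirichlet string problem and makes the role of concavity explicit, whereas in your approach concavity would have to be used to manufacture $\phi$ before anything else can proceed.
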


The proof of Lemma \ref{pw1} is similar to that of Lemma \ref{2} (cf. \cite{Beb}, \cite{PW}). Note that $v(s)$ satisfies a singular Sturm-Liouville problem and that we may make a change of variables as in the original proof:
$$
w=v'p^{1/2}.
$$ 
Since we are assuming that $v'(0)=v\left(\dfrac L 2 \right)=0$, we see that $w$ satisfies homogeneous Dirichlet boundary conditions and can thus serve as a test function in the Rayleigh quotient for a vibrating string of length $\dfrac{L}{2}$ with fixed ends.

\begin{proof}[Proof of Theorem \ref{33}]
In addition to Lemma \ref{pw1}, we will use a slight modification of the arguments in the proof of Theorem \ref{gc}. First, we observe that an eigenfunction $u$ corresponding to $\mu_1^{odd}(D)$ is the first eigenfunction of the Laplace operator on $D_+$ with mixed boundary conditions: Dirichlet on $\partial D_+\cap\{x=0\}$ and Neumann on the remaining part of $\partial D_+$. Fixing $n \in \N$, we partition $D_+$ as
$$
D_+^{ij}=\left\{(x,y,z)\in D_+:\> \frac{i\delta}{n}<\dist_\varphi(x,y,z)<\frac{(i+1)\delta}{n},\> \frac{j\pi}{n}<\theta<\frac{(j+1)\pi}{n}\right\}, \> 
$$
for $i=0,\dots,n-1$ and $j=0, \dots, n-1$. We observe that
\begin{eqnarray}\label{Di}
\int_{D_{ij}^+} |\nabla u|^2 dxdydz &=& \int_0^{\frac L 2}\left(\int_{\frac{j\pi}{n}}^{\frac{(j+1)\pi}{n}}\int_{\frac{i\delta}{n}}^{\frac{(i+1)\delta}{n}}\left(u_r^2+u_s^2 \frac{x^2(1+dr)^2}{\mathcal{J}^2}+u_\theta^2\frac{x^2(1+ar)^2}{\mathcal{J}^2}\right)\mathcal{J} dr d\theta \right)ds
\\
&\ge & \min_{D_{ij}^+}\frac{x^2(1+dr)^2}{\mathcal{J}^2}\int_0^{\frac L 2}\left(\int_{\frac{j\pi}{n}}^{\frac{(j+1)\pi}{n}}\int_{\frac{i\delta}{n}}^{\frac{(i+1)\delta}{n}}u_s^2\mathcal{J}\,dr d\theta\right)ds \notag
\\
&\ge&B\int_0^{\frac L 2}\left(\int_{\frac{j\pi}{n}}^{\frac{(j+1)\pi}{n}}\int_{\frac{i\delta}{n}}^{\frac{(i+1)\delta}{n}}u_s^2\mathcal{J}\,dr d\theta \right)ds, \notag
\end{eqnarray}
where in the last line, we have used that $\frac{x^2(1+dr)^2}{\mathcal{J}^2}=\frac{1}{(1+rk(s))^2}$. Let us write
$$
\int_0^{\frac L2}\left(\int_{\frac{j\pi}{n}}^{\frac{(j+1)\pi}{n}}\int_{\frac{i\delta}{n}}^{\frac{(i+1)\delta}{n}}u_s^2 \mathcal{J}\,dr d\theta \right)ds=I_1+I_2,
$$
where
\begin{eqnarray*}
I_1&=&\int_0^{\frac L 2}\left(\int_{\frac{j\pi}{n}}^{\frac{(j+1)\pi}{n}}\int_{\frac{i\delta}{n}}^{\frac{(i+1)\delta}{n}}\left(u_s^2(r, s, \theta)-u_s^2\left(\frac{i\delta}{n}, s,\frac{j\pi}{n}\right)\right)\mathcal{J}dr d\theta \right)ds,
\\
I_2&=&\int_0^{\frac L 2}\left(\int_{\frac{j\pi}{n}}^{\frac{(j+1)\pi}{n}}\int_{\frac{i\delta}{n}}^{\frac{(i+1)\delta}{n}}u_s^2\left(\frac{i\delta}{n}, s,\frac{j\pi}{n}\right)\mathcal{J}dr d\theta \right)ds\\
&=&\int_0^{\frac L 2}u_s^2\left(\frac{i\delta}{n}, s,\frac{j\pi}{n}\right)p(s)ds,
\end{eqnarray*}
with
\begin{equation} \label{Eqn:pDef}
p(s)=\int_{\frac{j\pi}{n}}^{\frac{(j+1)\pi}{n}}\int_{\frac{i\delta}{n}}^{\frac{(i+1)\delta}{n}}\mathcal{J}(r,s)\,dr d\theta .
\end{equation}

Analogously, it holds that
$$
\int_{D_{ij}^+} u^2dxdydz=\int_0^{\frac L 2}\left(\int_{\frac{j\pi}{n}}^{\frac{(j+1)\pi}{n}}\int_{\frac{i\delta}{n}}^{\frac{(i+1)\delta}{n}}u^2(r, s, \theta)\mathcal{J}\,dr d\theta \right)ds=H_1+H_2,
$$
where
\begin{eqnarray*}
H_1&=&\int_0^{\frac L 2}\left(\int_{\frac{j\pi}{n}}^{\frac{(j+1)\pi}{n}}\int_{\frac{i\delta}{n}}^{\frac{(i+1)\delta}{n}}\left(u^2(r, s, \theta)-u^2\left(\frac{i\delta}{n}, s,\frac{j\pi}{n}\right)\right)\mathcal{J} dr d\theta \right)ds,
\\
H_2&=&\int_0^{\frac L 2}\left(\int_{\frac{j\pi}{n}}^{\frac{(j+1)\pi}{n}}\int_{\frac{i\delta}{n}}^{\frac{(i+1)\delta}{n}}u^2\left(\frac{i\delta}{n}, s,\frac{j\pi}{n}\right)\mathcal{J}dr d\theta \right)ds
\\
&=&\int_0^{\frac L 2}u^2\left(\frac{i\delta}{n}, s,\frac{j\pi}{n}\right)p(s)ds.
\end{eqnarray*}
Since $\mathcal{J}(r,s)$ is concave in $s \in \left[ 0, \dfrac{L}{2} \right]$ and $\mathcal{J}\left(r,\dfrac{L}{2}\right)=0$ for each $r$, we see that $p(s)$ as defined in \eqref{Eqn:pDef} satisfies the hypotheses of Lemma \ref{pw1}. The lemma gives a relationship between $I_2$ and $H_2$, and the remainder of the proof follows that of Theorem \ref{gc}.
\end{proof}

\begin{example}[Half-spherical shell]
If $\alpha(s)=\left(R\sin\left(\dfrac s R\right), 0, -R \cos \left(\dfrac s R\right)\right)$, $s \in [0,2\pi R]$, then for any $\delta>0$ we get the half spherical shell 
$$
D_+=\left\{\left((r+R)\sin\left(\dfrac s R\right)\sin \theta,\>(r+R)\sin\left(\dfrac s R\right)\cos \theta,\>-(r+R)\cos\left(\dfrac s R\right)\right): \> r\in(0,\delta), s\in [0, \pi R], \theta \in (0,\pi)\right\}.
$$
In this case, we have
\[
\frac{1}{(1+rk(s))^2}=\frac{R^2}{(r+R)^2}.
\]
In \cite{L}, Li proves that $\mu_1(D)$ has multiplicity 3. Thus $\mu_1(D)$ must correspond to the angular eigenfunctions in the usual separation of variables, and hence there is an odd eigenfunction associated to $\mu_1(D)$.  Theorem \ref{33} gives
$$
\mu_1(D)=\mu_1^{odd}(D)\ge \frac{1}{4(\delta +R)^2}.$$
\end{example}

%--------------------------------------------------------------------------------------------------------------------------------------------------------------------------------------------------------------------------------------------------------------------------------------------------------------------------------------------------------------------------------------------------------------

\section{Appendix}

Here we provide some details about the Fermi coordinate systems in two and three dimensions. For the two-dimensional computations, let $\gamma(s) = (x(s), y(s)), \>s \in [0,L],$ be a smooth,  non-closed, simple curve, parametrized with respect to arc length.  Let $D$ be a simply connected domain described with coordinates $(r,s)$ as in \eqref{D}.  Consider the coordinates $X=X(r,s)$ and $Y=Y(r,s),$ where
\begin{align*}
X&=x(s)+ry'(s), \\
Y&=y(s)-rx'(s).
\end{align*}
Recall that the signed curvature $k(s)$ is defined by
\[
k(s)=x'(s)y''(s)-y'(s)x''(s).
\]
It is straightforward to verify that the Jacobian matrix is
\[
J = \frac{\partial(X,Y)}{\partial(r,s)}=\left(
\begin{matrix}
y' & x'+ry'' \\
-x' & y'-rx''
\end{matrix}\right).
\]
Assuming that $1+\delta k(s)>0$ for $s\in [0,L]$,  we see that the absolute value may be dropped:
\begin{equation}\label{inv}
\J =|\det (J)|=1+rk(s)>0 \quad \mbox{for every}\> s\in [0,L], \, r \in [0,\delta].
\end{equation}

We explicitly observe that \eqref{inv} and the Inverse Function Theorem imply that the Fermi coordinates from $[0,\delta]\times[0,L]$ to $\overline{D}$  are locally one-to-one. Moreover, since $D$ is simply connected, the Global Invertibility Theorem ensures that the Fermi coordinates are globally one-to-one. 
For smooth functions $u$, we have
\begin{equation}\label{co}
\int_{D}u(X,Y)dXdY=\int_0^L\int_0^{\delta}u(r,s)(1+rk(s))drds.
\end{equation}
Similarly, one calculates
\begin{align*}
u_X&=-\frac{1}{1+rk(s)}\left( -x'u_s+(rx''-y')u_r\right),\\
u_Y&=-\frac{1}{1+rk(s)}\left(-y'u_s+(x'+ry'')u_r\right),
\end{align*}
from which one deduces
\[
|\nabla u(X,Y)|^2=\frac{1}{(1+rk(s))^2}u_s^2+u_r^2.
\]

\medskip
In the three-dimensional case, let $\varphi$ be a surface described as $\varphi(s,t) = (x(s,t), y(s,t), z(s,t)), (s,t) \in \Omega$ with $\Omega$ a bounded domain in $\mathbb{R}^2$.  Let $D$ be a simply connected three-dimensional domain described with coordinates $(r,s,t)$ as in \eqref{3dD}, and suppose that $\{\varphi_s, \varphi_t, \nu \}$ forms an orthonormal basis for $\mathbb{R}^3$.  Consider the coordinates $X=X(r,s,t)$, $Y=Y(r,s,t)$ and $Z=Z(r,s,t)$, where
\begin{align*}
X&=x(s,t)+r\nu_x(s,t),\\
Y&=y(s,t)+r\nu_y(s,t), \\
Z&=z(s,t)+r\nu_z(s,t).
\end{align*}
The Jacobian matrix of this transformation is
\[
J = \frac{\partial(X,Y,Z)}{\partial(r,s,t)}=\left(
\begin{matrix}
\nu_x & x_s+r\nu_{x,s} & x_t+r\nu_{x,t} \\
\nu_y & y_s+r\nu_{y,s} & y_t+r\nu_{y,t} \\
\nu_z & z_s+r\nu_{z,s} & z_t+r\nu_{z,t}
\end{matrix}\right) .
\]

To simplify the computation of the determinant of this Jacobian, we recall some notation.  Let
\begin{eqnarray*}
E =|\varphi_s|^2, \quad  F&=\varphi_s\cdot \varphi_t,   \quad G&=|\varphi_t|^2,
\\ 
  L =\varphi_{ss}\cdot \nu,  \quad M&=\varphi_{s t}\cdot \nu,  \quad N&=\varphi_{t t}\cdot \nu,
\end{eqnarray*}

\begin{eqnarray}\label{notation}
a=\frac{FM-LG}{EG-F^2},& b=\dfrac{LF-EM}{EG-F^2}, \nonumber
\\ \\
c=\frac{FN-MG}{EG-F^2},& d=\dfrac{MF-NE}{EG-F^2}, \nonumber
\end{eqnarray}

\begin{eqnarray*}
&H&=-\dfrac{a+d}{2}=\dfrac{LG+NE-2MF}{2(EG-F^2)} \qquad \mbox{mean curvature,}
\\
&K&=ad-bc=\dfrac{LN-M^2}{EG-F^2} \qquad \mbox{Gauss curvature}.
\end{eqnarray*}
We can simplify 
\begin{eqnarray*}
\det(J) &=& \nu \cdot \left[ \left(\varphi_s + r \nu_s\right) \times \left(\varphi_t + r \nu_t \right)\right] \\
&=& \frac{\varphi_s \times \varphi_t}{||\varphi_s \times \varphi_t||}\cdot \left[ \left(\varphi_s + r \nu_s\right) \times \left(\varphi_t + r \nu_t\right)\right]
\end{eqnarray*}
using properties of the dot and cross products.  In addition to well-known properties, we use Lagrange's identity, which states that $(\mathbf{a} \times \mathbf{b}) \cdot (\mathbf{c} \times \mathbf{d}) = (\mathbf{a} \cdot \mathbf{c})(\mathbf{b} \cdot \mathbf{d}) - (\mathbf{a} \cdot \mathbf{d})(\mathbf{b} \cdot \mathbf{c})$ for vectors $\mathbf{a}, \mathbf{b}, \mathbf{c}, \mathbf{d}$.  We obtain
\[
 \J=|\det (J)| = \left(1-2rH+r^2K\right)\left(EG-F^2\right)^{1/2};
\]
here we have assumed $1-2rH+r^2K>0$ in order to drop the absolute values.
Thus for a smooth function $u$, we have
\begin{equation}\label{co}
\int_{D}u(X,Y,Z)dXdYdZ=\int_0^L\int_0^T\int_0^{\delta}u(r,s,t)\left(1-2rH+r^2K\right)\left(EG-F^2\right)^{1/2}dr dt ds.
\end{equation}

Finally, we need to express $|\nabla u(X,Y,Z)|^2$ with respect to Fermi coordinates.  We first compute
\begin{eqnarray*}
u_r &=& \nabla u \cdot \nu ,\\
u_s &=& \nabla u \cdot \varphi_s + r(\nabla u \cdot \nu_s), \\
u_t &=& \nabla u \cdot \varphi_t + r(\nabla u \cdot \nu_t).
\end{eqnarray*}
Then the reader may verify that, using the notation given in \eqref{notation},
\begin{eqnarray*}
\nu_s = a \varphi_s + b\varphi_t \quad \quad &\text{and}& \quad \quad \nu_t = c \varphi_s + d\varphi_t, \\ \\
\nabla u \cdot \varphi_s = \frac{(1+dr)u_s - bru_t}{1-2rH+r^2K} \quad &\text{and}& \quad \nabla u \cdot \varphi_t = \frac{(1+ar)u_t - cru_s}{1-2rH+r^2K}.
\end{eqnarray*}
Combining these expressions and making the additional assumption that $b=c=0$, we obtain
\[
|\nabla u|^2 =u_r^2+u_s^2\frac{(1+dr)^2}{(1-2rH+r^2K)^2}+u_t^2\frac{(1+ar)^2}{(1-2rH+r^2K)^2}.
\]

\noindent {\bf Acknowledgements}. B.B. and F.C. would like to thank the Department of Mathematics at Bucknell University for warm hospitality and support.  E.B.D. was partially supported by a grant from the Simons Foundation (210445). J.J.L. appreciates the funding he received from Bucknell University's International Research Travel Grants program. The authors would like to thank the anonymous referee for carefully reading the manuscript and for suggesting changes that strengthened the paper.

\medskip
%--------------------- 

\end{document}